\pdfoutput=1
\documentclass[12pt,a4paper]{article}

\usepackage[T1]{fontenc}
\usepackage[utf8]{inputenc}
\usepackage{lmodern}
\usepackage{amsmath,amssymb,amsthm,mathtools}
\usepackage{geometry}
\usepackage{booktabs}
\usepackage{graphicx}
\usepackage{float}
\usepackage{array}
\usepackage{microtype}
\usepackage{xcolor}
\usepackage[hidelinks]{hyperref}

\numberwithin{equation}{section}

\newtheorem{theorem}{Theorem}[section]
\newtheorem{lemma}[theorem]{Lemma}
\newtheorem{proposition}[theorem]{Proposition}
\newtheorem{corollary}[theorem]{Corollary}

\newcommand{\Z}{\mathbb Z}

\newcommand{\Circ}{\operatorname{Circ}}

\newcommand{\symdiff}{\triangle}

\title{\bfseries $\ell_{1}$-Embeddability of Regular Quadrilateral Maps on the Torus and the Klein Bottle}
\author{Guangfu Wang$^{1,2}\footnote{Corresponding author\\
		Emails: gfwang@ytu.edu.cn,1426654800@qq.com, xxuchunxiao@163.com \\}$,  Wenchao Cong$^1$, Chunxiao Xu$^1$\\[2mm]
	\small 1. School of Mathematics and Information Sciences, Yantai University,\\
	\small Yantai, Shandong 264005, China\\
	\small 2. ECOPRO, Institute for Basic Science, \\
	\small 55 Expo-ro, Yuseong-gu, Daejeon, 34126, Korea}
\date{}

\begin{document}
	\maketitle
	
	\begin{abstract}
		A connected graph is called $\ell_1$-embeddable if its shortest-path metric admits an isometric embedding into the $\ell_1$-space; equivalently, in the finite case, it admits a scale embedding into a hypercube.  This paper gives a complete parameter-level classification of $\ell_1$-embeddability for regular quadrilateral maps on the torus and the Klein bottle.  We treat the two toroidal families $Q_{k,m,r}$ and $Q_{k,m,e}$, obtained from suitable identifications of finite square grids, and, using the known isomorphisms among Klein-bottle quadrilateral maps, the three essential Klein-bottle families $Q_{k,m,a}$, $Q_{k,m,b}$, and $Q_{k,m,h}$.  The $\ell_1$-embeddable toroidal maps are exactly 
		$Q_{k,m,0},  Q_{k,2,1},  Q_{k,1,e},  Q_{5,2,e},  Q_{2i,2,e}  (i\ge 2)$, 
		and the $\ell_1$-embeddable Klein-bottle maps are exactly $
		Q_{2,2,a},  Q_{4,2,a},  Q_{2,m,b},  Q_{1,2,h}.$
		All other members of these five families are not $\ell_1$-embeddable.  
	\end{abstract}
	
	\noindent\textbf{Keywords.} $\ell_1$-embeddability; regular quadrilateral maps; torus; Klein bottle \\
	\textbf{MSC 2020.} 05C12, 05C10, 05C62, 57M15.
	
	\section{Introduction}
	
	Isometric representations of graph metrics in $\ell_1$-spaces lie at the intersection of metric geometry, cut theory, and structural graph theory.  A connected graph $G$ is $\ell_1$-embeddable when its shortest-path metric admits an isometric embedding into the $\ell_1$-space.  For finite graphs this is equivalent to the existence of a scale embedding into a hypercube: there are positive integers $\lambda,n$ and a map $\varphi:V(G)\to V(Q_n)$ such that
	\[
	d_{Q_n}(\varphi(u),\varphi(v))=\lambda d_G(u,v)\qquad (u,v\in V(G)).
	\]
	The scale-one case is precisely the class of partial cubes; see, for example \cite{DL1997}.  Thus an $\ell_1$-embedding represents graph distances as sums of cut metrics, and it imposes strong restrictions on geodesics, isometric cycles, and edge cuts.
	
	Maps on surfaces provide a natural setting in which to study the global nature of this property.  A regular quadrilateral map is locally modeled on the square lattice: every face is a quadrilateral and, away from small degenerate parameters, every vertex has degree four.  Globally, however, the map is obtained by identifying the boundary of a finite grid.  Translational identifications produce toroidal quotients, whereas orientation-reversing identifications produce Klein-bottle quotients.  The resulting graphs can have the same local square-grid geometry but very different systems of metric cuts.  The central question is therefore which global identifications preserve the cut structure required by an $\ell_1$-metric.
	
	A substantial literature has developed around $\ell_1$-embeddability for planar and surface graphs.  Broad criteria are known in the planar case \cite{PSC1990,CDG1997}.  Exact classifications have also been obtained for several families arising from tilings and maps on surfaces, including regular hexagonal and quadrilateral M\"obius graphs \cite{wangzhang2011}, generic quadrilateral M\"obius maps \cite{WS2019}, open-ended nanotube graphs \cite{ZW2010}, shifted quadrilateral cylinder graphs \cite{wangxiongchen2023}, and finite closed polyhexes \cite{DS2009}.  These results consistently show that the gluing pattern, and not merely the local tiling, controls $\ell_1$-embeddability.
	
	The regular maps considered here arise from Thomassen's classification of regular tilings on the torus and the Klein bottle \cite{Thomassen1991}.  In the quadrilateral case there are two toroidal families, denoted here by $Q_{k,m,r}$ and $Q_{k,m,e}$, and six Klein-bottle families.  Known isomorphisms among the latter reduce the metric classification to the three essential families $Q_{k,m,a}$, $Q_{k,m,b}$, and $Q_{k,m,h}$ \cite{XuQian}.  We determine exactly which members of these five essential families are $\ell_1$-embeddable.
	
	Throughout the paper all graphs are finite, connected, and undirected.  When a small parameter value creates loops or parallel edges in the map construction, $\ell_1$-embeddability is understood for the resulting underlying simple graph.  With the parameter ranges specified in Sections~\ref{sec:torus} and~\ref{sec:klein}, the main classification is as follows.
	
	\begin{center}
		\begin{tabular}{lll}
			\toprule
			Surface & Family & Precisely the $\ell_1$-embeddable cases \\
			\midrule
			Torus & $Q_{k,m,r}$ & $r=0$; or $m=2$ and $r=1$ \\
			Torus & $Q_{k,m,e}$ & $m=1$; or $(k,m)=(5,2)$; or $m=2$ and $k$ is even, $k\ge4$ \\
			Klein bottle & $Q_{k,m,a}$ & $(k,m)=(2,2)$ or $(4,2)$ \\
			Klein bottle & $Q_{k,m,b}$ & $k=2$ \\
			Klein bottle & $Q_{k,m,h}$ & $(k,m)=(1,2)$ \\
			\bottomrule
		\end{tabular}
	\end{center}
	
	The remaining Klein-bottle families follow from the isomorphisms cited above.  The classification contains several infinite positive families, but these are highly constrained.  The untwisted toroidal family $Q_{k,m,0}$ is the Cartesian product $C_k\square C_m$, and the only shifted family of the same type that survives is the width-two unit shift $Q_{k,2,1}$.  In the circulant family, the positive cases reduce to cycles, the exceptional complete graph $Q_{5,2,e}\cong K_5$, and the even squared-cycle family $Q_{k,2,e}$$(k \ge 4, even)$.  On the Klein bottle, the only infinite family is the product-like family $Q_{2,m,b}\cong P_2\square C_m$; all other positive cases are small exceptional graphs.
	
	The proof combines two complementary mechanisms.  The positive cases are established by Cartesian-product closure, explicit scale embeddings, and graph isomorphisms to known $\ell_1$-graphs.  The negative cases are driven by Shpectorov's edge-label method.  Given a scale embedding into a hypercube, each edge receives the set of coordinates on which its endpoints differ.  On an isometric even cycle, opposite edges have equal labels; on an isometric odd cycle, opposite labels have prescribed half-intersections; and on a shortest path the successive edge labels are pairwise disjoint.  In the regular quadrilateral families, facial 4-cycles propagate labels across the grid.  For almost every non-trivial shift or twist, this propagation closes inconsistently and forces equal labels on adjacent edges or a graph-distance contradiction.  The remaining circulant cases are handled by the 5-gonal inequality or by a direct label obstruction.
	
	This argument gives a conceptual explanation for the classification. The square grid carries two natural systems of
	cuts, corresponding to its two coordinate directions. Untwisted product identifications preserve those systems. Most shifted
	or orientation-reversing identifications do not: after one travels around a noncontractible direction, the induced edge labels
	return with an incompatible alignment. The exceptional width-two and small-parameter cases survive because the quotient
	collapses to a previously known $l_1$-graph rather than retaining the generic grid geometry.
	
	The toroidal circulant family also yields a concrete graph-theoretic consequence. Since $Q_{k,m,e}$ is isomorphic to Circ$(k,{\pm1,\pm m})$, our theorem classifies this entire two-generator subfamily (for $1\leq m\leq \lfloor k/2 \rfloor)$: the $l_1$-embeddable members are
	exactly the cycle cases $m = 1$, the graph Circ$(5,{\pm1,\pm})\cong K_5$, and Circ$(2i,{\pm1,\pm2})$ for $i\geq 2$. This formulation isolates
	the parity phenomenon in the squared-cycle subfamily and corrects the impression that the conclusion concerns arbitrary
	circulant graphs.
	
	The paper is organized as follows. Section~\ref{sec:prelim} collects the label tools and the 5-gonal inequality.  Section~\ref{sec:torus} treats the two toroidal families and derives the circulant corollary.  Section~\ref{sec:klein} analyzes the three essential Klein-bottle families and completes the classification.  Section~\ref{sec:conclusion} summarizes the structural consequence of the classification  and indicates directions for extending the method to other regular maps and grid quotients.

	\section{Preliminaries}\label{sec:prelim}
	
	For a connected graph $G$, write $d_G$ for its shortest-path metric.  The Cartesian product of graphs is denoted by $\square$.  The $n$-dimensional hypercube $Q_n$ has vertex set $2^{[n]}$, with two vertices adjacent when their symmetric difference has size one.  Thus
	\[
	d_{Q_n}(A,B)=|A\symdiff B|\qquad (A,B\subseteq [n]).
	\]
	
	A finite graph $G$ is an $\ell_1$-graph if and only if it admits a scale embedding into a hypercube; this equivalence is classical and follows from the cut-metric representation of finite $\ell_1$ metrics \cite{AD1980,DL1997}.  Thus there are positive integers $\lambda,n$ and a map $\varphi:V(G)\to V(Q_n)$ such that
	\begin{equation}\label{eq:scale-embedding}
		|\varphi(u)\symdiff \varphi(v)|=\lambda d_G(u,v)\qquad (u,v\in V(G)).
	\end{equation}
	The integer $\lambda$ is called the scale.  For an edge $uv\in E(G)$, following Shpectorov \cite{Shpectorov1993}, define its label, or atom, by
	\[
	\ell(uv)=\varphi(u)\symdiff \varphi(v).
	\]
	Then $|\ell(uv)|=\lambda$ for every edge $uv$.  We shall use the following standard facts about scale embeddings and hypermetric inequalities.  They are quoted here for reference; the new arguments of this paper begin in Section~\ref{sec:torus}.
	
	\begin{lemma}[ \cite{Shpectorov1993,DL1997}]\label{lem:path-labels}
		Let $G$ be an $\ell_1$-graph with a scale-$\lambda$ embedding $\varphi$ into a hypercube.  If $P=u_0u_1\cdots u_s$ is a path in $G$, then
		\[
		\varphi(u_0)\symdiff\varphi(u_s)=\ell(u_0u_1)\symdiff\ell(u_1u_2)\symdiff\cdots\symdiff\ell(u_{s-1}u_s).
		\]
		If $P$ is a shortest path, then the labels $\ell(u_0u_1),\ldots,\ell(u_{s-1}u_s)$ are pairwise disjoint.
	\end{lemma}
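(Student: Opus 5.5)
The first identity is a telescoping computation in the group $(2^{[n]},\symdiff)$. The symmetric difference is associative and commutative, and every set is its own inverse ($X\symdiff X=\emptyset$). By the definition of the labels,
\[
\ell(u_0u_1)\symdiff\ell(u_1u_2)\symdiff\cdots\symdiff\ell(u_{s-1}u_s)
=\bigl(\varphi(u_0)\symdiff\varphi(u_1)\bigr)\symdiff\bigl(\varphi(u_1)\symdiff\varphi(u_2)\bigr)\symdiff\cdots\symdiff\bigl(\varphi(u_{s-1})\symdiff\varphi(u_s)\bigr).
\]
Each interior vertex $u_1,\dots,u_{s-1}$ contributes $\varphi(u_i)$ exactly twice. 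These pairs cancel, leaving $\varphi(u_0)\symdiff\varphi(u_s)$. I would phrase this as an induction on $s$: the case $s=1$ is the definition, and the inductive step applies associativity once.

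For the disjointness claim, suppose $P$ is a shortest path, so $d_G(u_0,u_s)=s$. Combining \eqref{eq:scale-embedding} with the first part gives
\[
\lambda s=|\varphi(u_0)\symdiff\varphi(u_s)|=\Bigl|\,\mathop{\symdiff}_{i=1}^{s}\ell(u_{i-1}u_i)\Bigr|.
\]
For any finite family of sets $A_1,\dots,A_s$, the symmetric difference $A_1\symdiff\cdots\symdiff A_s$ is the set of elements lying in an odd number of the $A_i$. Hence
\[
\Bigl|\mathop{\symdiff}_i A_i\Bigr|\le\sum_i|A_i|,
\]
and equality holds if and only if every element of $\bigcup_i A_i$ lies in exactly one $A_i$, that is, the sets are pairwise disjoint. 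I would justify this by counting multiplicities. Write $c(x)=\#\{i:x\in A_i\}$. Then $\sum_i|A_i|=\sum_x c(x)$, while $|\symdiff_i A_i|=\#\{x: c(x)\text{ odd}\}\le\#\{x:c(x)\ge1\}\le\sum_x c(x)$. Both inequalities are equalities exactly when $c(x)\in\{0,1\}$ for all $x$.

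Each label satisfies $|\ell(u_{i-1}u_i)|=\lambda$, so $\sum_i|\ell(u_{i-1}u_i)|=\lambda s$. We therefore have equality in the bound above, and the labels are pairwise disjoint. The only step requiring care is this equality characterization. The rest is bookkeeping, and I expect no genuine obstacle. It is worth remarking that the argument uses only that $d_G(u_0,u_s)=s$ and that every edge label has size $\lambda$. In particular, it applies to every subpath of a shortest path.
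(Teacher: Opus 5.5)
Your proof is correct and complete: the telescoping identity in $(2^{[n]},\symdiff)$, together with the multiplicity count showing $|A_1\symdiff\cdots\symdiff A_s|\le\sum_i|A_i|$ with equality exactly when the sets are pairwise disjoint, is the standard argument behind this lemma. The paper gives no proof of its own and simply quotes the statement from Shpectorov and Deza--Laurent, so there is no alternative route to compare against.
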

	
	\begin{lemma}[ \cite{Shpectorov1993,wangzhang2011}]\label{lem:incident}
		In a simple $\ell_1$-graph, two distinct incident edges have distinct labels.
	\end{lemma}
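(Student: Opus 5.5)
The statement to prove is Lemma~\ref{lem:incident}: in a simple $\ell_1$-graph, distinct incident edges $uv$ and $uw$ (with $v\neq w$) receive distinct labels. The plan is a direct contradiction argument that uses only the scale-embedding identity \eqref{eq:scale-embedding} and the symmetric-difference calculus of Lemma~\ref{lem:path-labels}.

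Fix a scale-$\lambda$ embedding $\varphi$ and suppose $\ell(uv)=\ell(uw)$. Then
\[
\varphi(v)=\varphi(u)\symdiff\ell(uv)=\varphi(u)\symdiff\ell(uw)=\varphi(w).
\]
Indeed, symmetric difference is a group operation on $2^{[n]}$, so $\varphi(v)=\varphi(u)\symdiff\bigl(\varphi(u)\symdiff\varphi(v)\bigr)$, and likewise for $w$. Applying \eqref{eq:scale-embedding} to the pair $(v,w)$ gives
\[
\lambda\, d_G(v,w)=|\varphi(v)\symdiff\varphi(w)|=0 .
\]
Since $\lambda$ is a positive integer, $d_G(v,w)=0$ and hence $v=w$. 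This contradicts the distinctness of the two edges, because in a simple graph two distinct edges at $u$ must have distinct other endpoints.

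I would also remark, as a useful strengthening, that the labels are in fact disjoint whenever $v$ and $w$ are nonadjacent. In that case $vuw$ is a shortest path, so Lemma~\ref{lem:path-labels} applies directly. When $v$ and $w$ are adjacent, the triangle $uvw$ gives $\ell(uv)\symdiff\ell(uw)=\ell(vw)$, which has size $\lambda$. It follows that $|\ell(uv)\cap\ell(uw)|=\lambda/2$, so the two labels are distinct but overlap.

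There is no real obstacle here. The only point requiring care is the role of simplicity: it is what rules out parallel edges. Without it, two parallel edges would genuinely share a label, which is why the paper passes to the underlying simple graph in the degenerate parameter cases.
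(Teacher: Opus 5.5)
Your proof is correct: equal labels at $u$ force $\varphi(v)=\varphi(w)$, so $\lambda\, d_G(v,w)=0$ and $v=w$, which simplicity rules out; your side remark (disjoint labels when $v,w$ are nonadjacent, overlap of size $\lambda/2$ in a triangle) is also right. The paper gives no argument of its own and simply quotes the lemma from Shpectorov and Wang--Zhang, and this injectivity argument is the standard justification behind that citation.
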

	
	\begin{lemma}[Isometric cycles \cite{DL1997,Koolen2004}]\label{lem:isometric-cycle}
		Let $C$ be an isometric cycle in an $\ell_1$-graph $G$ with scale $\lambda$.
		\begin{enumerate}
			\item If $C$ is even, then opposite edges of $C$ have equal labels.
			\item If $C$ is odd, then the labels of opposite edges have intersection of size $\lambda/2$; moreover, the label of an edge is contained in the union of the labels of the two opposite edges.
			\item Labels of non-opposite edges of $C$ are disjoint.
		\end{enumerate}
	\end{lemma}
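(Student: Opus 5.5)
We prove the three parts of Lemma~\ref{lem:isometric-cycle} separately. Each part uses Lemma~\ref{lem:path-labels} on the geodesic subpaths of $C$, together with the identity $|\varphi(u)\symdiff\varphi(v)|=\lambda d_G(u,v)$. Since $C$ is isometric, $d_G$ restricted to $V(C)$ is the cycle metric. Write $C=u_0u_1\cdots u_{n-1}u_0$ and $e_i=u_iu_{i+1}$, with indices taken mod $n$. Because $\varphi(u_0)\symdiff\varphi(u_0)=\emptyset$, going around the whole cycle gives $\ell(e_0)\symdiff\cdots\symdiff\ell(e_{n-1})=\emptyset$. So every coordinate lies in an even number of the labels.

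\textbf{Part 3.} Take two edges $e_i,e_j$ that are not opposite. Then some arc of $C$ containing both has length at most $\lfloor n/2\rfloor$. An arc of that length is a shortest path in $C$, and hence in $G$ because $C$ is isometric. Lemma~\ref{lem:path-labels} then makes the labels along this arc pairwise disjoint, so $\ell(e_i)\cap\ell(e_j)=\emptyset$.

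\textbf{Part 1.} Let $n=2p$. The edge $e_0$ is opposite only to $e_p$, and by Part 3 it is disjoint from all other labels. Every coordinate in $\ell(e_0)$ must lie in an even number of labels, so it lies in $\ell(e_p)$. Hence $\ell(e_0)\subseteq\ell(e_p)$. Both labels have size $\lambda$, so they are equal.

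\textbf{Part 2.} Let $n=2p+1$. The edge $e_0$ has two opposite edges, $e_p$ and $e_{p+1}$. The same parity argument puts each coordinate of $\ell(e_0)$ in exactly one of $\ell(e_p),\ell(e_{p+1})$. This already gives the containment $\ell(e_0)\subseteq\ell(e_p)\cup\ell(e_{p+1})$. Applying the same reasoning to $e_p$, whose opposite edges are $e_{2p}$ and $e_0$, gives
\[
\ell(e_p)\subseteq\ell(e_{2p})\cup\ell(e_0).
\]
For the intersection size, set $a=|\ell(e_0)\cap\ell(e_p)|$ and $b=|\ell(e_0)\cap\ell(e_{p+1})|$, so $a+b=\lambda$. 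We now count, using the disjointness from Part 3, with the aim of proving $a=b$. The path $u_1u_2\cdots u_{p+1}$ has length $p$ and is a geodesic, and the same holds for $u_{p+1}\cdots u_{2p}u_0$. Label the edges by the chord-free count
\[
|\varphi(u_0)\symdiff\varphi(u_{p+1})|=\lambda\,d(u_0,u_{p+1})=\lambda p .
\]
Computing the same quantity along the arc $e_0,e_1,\dots,e_p$ of length $p+1$ gives $|\ell(e_0)\symdiff\cdots\symdiff\ell(e_p)|=(p+1)\lambda-2a$. In this arc the only overlapping pair is $e_0,e_p$, since every other pair is non-opposite and hence disjoint by Part 3. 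Equating the two expressions gives $2a=\lambda$, so $a=\lambda/2$ (and likewise $b=\lambda/2$). Incidentally, this shows $\lambda$ must be even whenever an odd isometric cycle is present.

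The step that needs care is the overlap bookkeeping in the odd case. One has to confirm that inside an arc of length $p+1$, the pair $\{e_0,e_p\}$ is the only opposite pair. This holds because two edges $e_i,e_j$ are opposite exactly when $j-i\equiv p$ or $p+1 \pmod{n}$.
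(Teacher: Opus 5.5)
Your proof is correct. The paper gives no proof of this lemma: it is quoted from Deza--Laurent and Koolen et al.\ as a standard tool, so there is no in-paper argument to match, and your proposal is a self-contained derivation of the cited result from Lemma~\ref{lem:path-labels} alone.

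The steps all hold:
\begin{itemize}
\item The closed-walk identity $\ell(e_0)\symdiff\cdots\symdiff\ell(e_{n-1})=\varnothing$ gives the parity condition.
\item Part~3 follows because two non-opposite edges at index gap $d$ satisfy $\min(d,n-d)\le\lfloor n/2\rfloor-1$ in both parities. So they lie on a common arc of length at most $\lfloor n/2\rfloor$, which is a geodesic in $G$.
\item Parity plus Part~3 forces $\ell(e_0)\subseteq\ell(e_p)$ when $n=2p$. When $n=2p+1$, it forces each coordinate of $\ell(e_0)$ into exactly one of $\ell(e_p),\ell(e_{p+1})$, so $a+b=\lambda$.
\item In the arc $e_0,\dots,e_p$ the only opposite pair is $\{e_0,e_p\}$. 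Hence $\lambda p=|\varphi(u_0)\symdiff\varphi(u_{p+1})|=(p+1)\lambda-2a$, which gives $a=b=\lambda/2$.
\end{itemize}

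Your route makes the lemma depend only on the scale identity, and it shows that $\lambda$ must be even whenever an odd isometric cycle is present. The citation buys only brevity.

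Some cleanup is worth doing:
\begin{itemize}
\item Delete the garbled phrase ``Label the edges by the chord-free count.''
\item The geodesic claims about $u_1\cdots u_{p+1}$ and the containment $\ell(e_p)\subseteq\ell(e_{2p})\cup\ell(e_0)$ are never used. The only input you need there is $d(u_0,u_{p+1})=p$, which follows from the isometry of $C$.
\item Say explicitly that $e_0$ is arbitrary up to rotating indices, so the conclusions hold for every edge of $C$.
\end{itemize}
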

	
	\begin{lemma}[ \cite{DL1997,Koolen2004}]\label{lem:face-labels}
		If $uvwxu$ is an isometric 4-cycle in an $\ell_1$-graph, then
		\[
		\ell(uv)=\ell(wx),\qquad \ell(vw)=\ell(xu),
		\]
		and labels of adjacent edges of this 4-cycle are disjoint.
	\end{lemma}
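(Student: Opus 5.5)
The statement is Lemma~\ref{lem:face-labels}: for an isometric $4$-cycle $uvwxu$ in an $\ell_1$-graph with scale $\lambda$,
\[
\ell(uv)=\ell(wx),\qquad \ell(vw)=\ell(xu),
\]
and adjacent edges of the cycle have disjoint labels. I would derive it directly from Lemma~\ref{lem:path-labels} rather than quote the even case of Lemma~\ref{lem:isometric-cycle}, so that the argument is self-contained.

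Write $A=\ell(uv)$, $B=\ell(vw)$, $C=\ell(wx)$, $D=\ell(xu)$. Each has size $\lambda$.

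\textbf{Step 1 (adjacent labels are disjoint).} Since the cycle is isometric, $d_G(u,w)=2=d_G(v,x)$. Hence $uvw$, $uxw$, $vwx$ and $vux$ are all shortest paths. Lemma~\ref{lem:path-labels} then gives $A\cap B=\emptyset$, $D\cap C=\emptyset$, $B\cap C=\emptyset$ and $A\cap D=\emptyset$. These are exactly the four pairs of adjacent edges, which proves the disjointness claim.

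\textbf{Step 2 (opposite labels are equal).} By the path identity in Lemma~\ref{lem:path-labels}, going from $u$ to $w$ both ways gives
\[
\varphi(u)\symdiff\varphi(w)=A\symdiff B=D\symdiff C.
\]
By Step~1, $A\symdiff B=A\cup B$ and $D\symdiff C=D\cup C$, so $A\cup B=C\cup D$. The same argument between $v$ and $x$ gives $B\cup C=A\cup D$.

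Now $A\subseteq C\cup D$, and $A\cap D=\emptyset$, so $A\subseteq C$. Since $|A|=|C|=\lambda$, this forces $A=C$. Symmetrically, $B\subseteq A\cup D$ and $B\cap A=\emptyset$ give $B\subseteq D$, and equal cardinalities give $B=D$.

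The argument uses only the shortest-path disjointness and the fact that every label has size exactly $\lambda$, so I do not expect any real obstacle. The one point needing care is that isometry of the cycle is precisely what makes the length-$2$ subpaths geodesic. The hypothesis cannot be weakened to an arbitrary $4$-cycle, because a chord would put $u$ and $w$ at distance $1$ and the disjointness in Step~1 would fail.
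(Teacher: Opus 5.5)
Your proof is correct. The paper gives no proof of this lemma: it is quoted from the literature, and within the paper's own framework it is just the case $|C|=4$ of Lemma~\ref{lem:isometric-cycle}, with part~1 giving the opposite edges and part~3 the adjacent ones. You instead derive it directly from the telescoping identity and geodesic disjointness in Lemma~\ref{lem:path-labels}, together with $|\ell(e)|=\lambda$.

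The citation route is shorter. It also places the statement inside the general isometric-cycle machinery that the paper needs anyway: the long cycles in Theorems~\ref{thm:qkmr} and~\ref{thm:qkma}, and the $5$-cycles in Proposition~\ref{prop:q92e}. Your route is self-contained. It makes visible that, for a $4$-cycle, isometry amounts exactly to $d(u,w)=d(v,x)=2$, i.e.\ to the cycle being induced. That is precisely the form in which the paper invokes the lemma in the proof of Theorem~\ref{thm:qkme} (``induced, hence isometric'').

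One small streamlining is possible. The single identity $A\cup B=C\cup D$ already gives both equalities, because $A\cap D=\varnothing$ forces $A\subseteq C$ and $B\cap C=\varnothing$ forces $B\subseteq D$. So the $v$--$x$ geodesics are needed only to supply those two disjointness relations, not the second union identity.
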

	
	\begin{lemma}[ \cite{DL1997,Shpectorov1993}]\label{lem:product}
		Cycles and complete graphs are $\ell_1$-graphs.  Moreover, the Cartesian product of two $\ell_1$-graphs is an $\ell_1$-graph.
	\end{lemma}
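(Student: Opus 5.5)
The statement to be justified is Lemma~\ref{lem:product}. It has two parts: cycles and complete graphs are $\ell_1$-graphs, and the Cartesian product of two $\ell_1$-graphs is again an $\ell_1$-graph. Since $\ell_1$-embeddability is equivalent to a scale embedding into a hypercube, I will give explicit scale embeddings in each case and check \eqref{eq:scale-embedding} directly.

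\emph{Complete graphs.} For $K_n$, I send vertex $i$ to the singleton $\{i\}\subseteq[n]$. Any two distinct singletons have symmetric difference of size $2$. This gives a scale-$2$ embedding.

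\emph{Even cycles.} For $C_{2t}$ with vertices $0,\dots,2t-1$, I use the half-window map $i\mapsto\{i,i+1,\dots,i+t-1\}\pmod{2t}$ into $Q_{2t}$. For $0\le j-i\le t$, the windows for $i$ and $j$ differ exactly in the $2(j-i)$ positions $\{i,\dots,j-1\}$ and $\{i+t,\dots,j+t-1\}$, with indices taken mod $2t$. This gives $|\varphi(i)\symdiff\varphi(j)|=2\,d(i,j)$. (Alternatively, $i\mapsto\{0,\dots,i-1\}$ for $i\le t$ and $i\mapsto\{i-t,\dots,t-1\}$ for $i>t$, into $Q_t$, gives scale $1$.)

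\emph{Odd cycles.} For $C_{2t+1}$, I use the same window map $i\mapsto\{i,\dots,i+t-1\}\pmod{2t+1}$ into $Q_{2t+1}$. Take $j-i=s\le t$. The window for $i$ loses the $s$ elements $i,\dots,i+s-1$, and the window for $j$ gains the $s$ elements $i+t,\dots,i+t+s-1$. These two index ranges are disjoint because $2s\le 2t<2t+1$, so the symmetric difference has size exactly $2s$. This gives a scale-$2$ embedding.

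\emph{Cartesian products.} Suppose $\varphi_1:V(G_1)\to 2^{[n_1]}$ has scale $\lambda_1$ and $\varphi_2:V(G_2)\to 2^{[n_2]}$ has scale $\lambda_2$. I first rescale both to the common scale $\lambda=\lambda_1\lambda_2$. To multiply the scale of an embedding by $c$, I replace each coordinate by $c$ copies: $A\mapsto A\times[c]$. This multiplies every symmetric-difference size by $c$. After rescaling, I define
\[
\varphi(u_1,u_2)=\varphi_1'(u_1)\times\{1\}\;\cup\;\varphi_2'(u_2)\times\{2\},
\]
so the two factors occupy disjoint coordinate sets. Symmetric differences add over the two blocks, and the product metric satisfies $d_{G_1\square G_2}=d_{G_1}+d_{G_2}$. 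Therefore
\[
|\varphi(u)\symdiff\varphi(v)|=\lambda\,\bigl(d_{G_1}(u_1,v_1)+d_{G_2}(u_2,v_2)\bigr)=\lambda\, d_{G_1\square G_2}(u,v).
\]

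The only real obstacle is the distance count for odd cycles, in particular confirming that the lost and gained index ranges do not overlap. The bound $2s\le 2t<2t+1$ settles this.
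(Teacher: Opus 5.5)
Your proposal is correct. The paper gives no argument of its own for this lemma and only cites Deza--Laurent and Shpectorov; your constructions are the standard ones behind that citation: singletons for $K_n$, half-window sets for cycles, and coordinate copying to a common scale followed by disjoint coordinate blocks for Cartesian products. One point deserves more careful phrasing: in the odd-cycle case, the disjointness of the lost and gained arcs rests on two facts, namely $s\le t$ (so the gained arc starts after the lost arc ends) and $t+s\le 2t<2t+1$ (so the gained arc does not wrap back around to $i$), rather than on the count $2s\le 2t<2t+1$ alone.
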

	
	\begin{lemma}[ \cite{DeGr97}]\label{lem:aprisms}
		For every $b\ge3$, the 4-regular bifaced polyhedron $APrism_b=(4;3,b;2b,2)$ admits a scale-$2$ embedding into a hypercube $Q_{b+1}$A.
	\end{lemma}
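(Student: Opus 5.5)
We plan to prove Lemma~\ref{lem:aprisms} by writing down an explicit map. The approach is to view the antiprism as a squared even cycle and to lift the standard isometric embedding of $C_{2b}$ into $Q_b$ by one extra parity coordinate.

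\emph{Step 1: identify the antiprism as a circulant.} Label the two $b$-gonal faces $u_0\cdots u_{b-1}$ and $v_0\cdots v_{b-1}$, so that the triangles are $u_iv_iv_{i+1}$ and $u_iu_{i+1}v_{i+1}$ (indices mod $b$). Set $w_{2i}=u_i$ and $w_{2i+1}=v_{i+1}$. Each $u_i$ has neighbours $u_{i\pm1},v_i,v_{i+1}$, which become $w_{2i\pm2}$ and $w_{2i\pm1}$; the $v$'s behave symmetrically. Hence $APrism_b\cong\Circ(2b,\{\pm1,\pm2\})$ on vertices $w_0,\dots,w_{2b-1}$. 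For $0\le j,j'<2b$, let $\delta(j,j')=\min(|j-j'|,2b-|j-j'|)\le b$ be the cyclic distance.

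\emph{Step 2: compute the graph distance.} We claim $d(w_j,w_{j'})=\lceil \delta/2\rceil$. The upper bound comes from walking along the shorter arc with steps of length $2$, and one step of length $1$ if $\delta$ is odd. The lower bound holds because each edge changes the cyclic position by at most $2$.

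\emph{Step 3: define the embedding.} Use coordinate set $\{0,1,\dots,b\}$, which has $b+1$ elements. The usual isometric embedding of $C_{2b}$ into $Q_b$ is
\[
\psi(w_j)=\{1,\dots,j\}\ (0\le j\le b),\qquad \psi(w_j)=\{j-b+1,\dots,b\}\ (b<j<2b).
\]
It satisfies $|\psi(w_j)\symdiff\psi(w_{j'})|=\delta(j,j')$. Define $\varphi(w_j)=\psi(w_j)\cup\{0\}$ if $j$ is odd, and $\varphi(w_j)=\psi(w_j)$ if $j$ is even. Since $2b$ is even, the parity of $j-j'$ equals the parity of $\delta$. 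Therefore
\[
|\varphi(w_j)\symdiff\varphi(w_{j'})|=\delta+(\delta\bmod 2)=2\lceil\delta/2\rceil=2\,d(w_j,w_{j'}).
\]
This is a scale-$2$ embedding into $Q_{b+1}$.

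\emph{Main obstacle.} The only point needing care is the isometry of $\psi$ on $C_{2b}$. It is standard: for $0\le j<j'<2b$, one checks that the symmetric difference of the two "intervals" has size $j'-j$ or $2b-(j'-j)$, whichever is at most $b$, splitting into cases by whether $j,j'\le b$. The hypothesis $b\ge3$ is used only to ensure that the graph is simple and $4$-regular, so that Step~2 applies.
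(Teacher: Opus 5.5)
Your construction is correct, and it takes a genuinely different route from the paper. The paper gives no proof of this lemma. It imports the result from Deza--Grishukhin \cite{DeGr97} and then, in Proposition~\ref{prop:qkmr-positive}, identifies $Q_{k,2,1}$ with $APrism_k$.

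You instead prove it from scratch:
\begin{itemize}
\item the relabelling $w_{2i}=u_i$, $w_{2i+1}=v_{i+1}$ shows $APrism_b\cong\Circ(2b,\{\pm1,\pm2\})$;
\item the graph distance is $\lceil\delta/2\rceil$;
\item adding one parity coordinate to the standard isometric embedding $C_{2b}\to Q_b$ yields exactly twice the graph distance.
\end{itemize}
I checked the interval computation for $\psi$ in all three position cases, and the identity $\delta\equiv j-j'\pmod 2$ (which uses that $2b$ is even). Both are right.

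What your route buys is self-containment and explicit coordinates, plus two benefits specific to this paper. First, since $\Circ(2b,\{\pm1,\pm2\})=Q_{2b,2,e}$, your argument proves the $Q_{2i,2,e}$ case of Proposition~\ref{prop:qkme-positive} directly, without passing through the isomorphism with $Q_{i,2,1}$. Second, it shows exactly where evenness enters. For a cycle of odd length, the parity of $j-j'$ no longer agrees with that of the cyclic distance, so the lift breaks. This is consistent with, though of course does not prove, the failure of $Q_{k,2,e}$ for odd $k\ne5$. The citation route buys only brevity.

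One minor remark: the hypothesis $b\ge3$ is not actually needed for your computation. For $b=2$ your map sends $K_4$ into $Q_3$ with scale $2$, which is also correct. The hypothesis only matters for $APrism_b$ to be a genuine polyhedron.
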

	
	\begin{lemma}[ \cite{TY1960,DL1997}]\label{lem:5gonal}
		If $G$ is an $\ell_1$-graph, then for any five vertices $x,y,a,b,c$ of $G$,
		\[
		d(x,y)+d(a,b)+d(a,c)+d(b,c)
		\le d(x,a)+d(x,b)+d(x,c)+d(y,a)+d(y,b)+d(y,c).
		\]
	\end{lemma}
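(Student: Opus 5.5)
The plan is to reduce the inequality to cut semimetrics, where it becomes a one-line integer computation, and then to sum over the coordinates of the hypercube.

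\emph{Step 1 (reduction to cuts).} By the scale-embedding characterization \eqref{eq:scale-embedding}, there are a scale $\lambda$ and a map $\varphi:V(G)\to V(Q_n)$ with $\lambda d_G(u,v)=|\varphi(u)\symdiff\varphi(v)|$. For each coordinate $t\in[n]$, let $S_t=\{v:t\in\varphi(v)\}$ and let $\delta_{S_t}(u,v)\in\{0,1\}$ indicate whether exactly one of $u,v$ lies in $S_t$. Then
\[
\lambda d_G=\sum_{t=1}^n \delta_{S_t}.
\]
Both sides of the inequality are linear in $d$, and $\lambda>0$. It therefore suffices to prove the inequality with $d$ replaced by an arbitrary cut semimetric $\delta_S$ on the five points. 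The five points need not be distinct: repeated points are harmless, since $\delta_S$ is a semimetric.

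\emph{Step 2 (the cut computation).} Assign weights $\beta_x=\beta_y=1$ and $\beta_a=\beta_b=\beta_c=-1$. Moving all terms of the stated inequality to one side, it reads
\[
\sum_{\{p,q\}}\beta_p\beta_q\, d(p,q)\le 0,
\]
where the sum runs over the ten unordered pairs of the five points. For $d=\delta_S$, only pairs split by $S$ contribute, so the left side equals $\bigl(\sum_{p\in S}\beta_p\bigr)\bigl(\sum_{p\notin S}\beta_p\bigr)$. Write $\beta=\sum_{p\in S}\beta_p$. Since $\sum_p\beta_p=-1$, the left side is $\beta(-1-\beta)=-\beta(\beta+1)$. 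This is $\le0$ for every integer $\beta$, because no integer lies strictly between $-1$ and $0$.

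\emph{Step 3 (summation).} Summing the cut inequalities over all $t\in[n]$ gives the inequality for $\lambda d_G$. Dividing by $\lambda$ yields the claim.

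There is no real obstacle. The only point needing care is the bookkeeping in Step 2: one must check that the sign pattern of $\beta$ reproduces exactly the two sides as stated, namely $xy$ and the pairs within $\{a,b,c\}$ on the left, and the six mixed pairs on the right. One must also confirm that the identity $\sum_{\{p,q\}}\beta_p\beta_q\delta_S(p,q)=(\sum_S\beta)(\sum_{S^c}\beta)$ holds because $\delta_S(p,q)=1$ exactly when $p$ and $q$ are separated by $S$.
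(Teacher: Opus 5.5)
Your proof is correct: you rewrite the inequality as the hypermetric inequality $\sum_{\{p,q\}}\beta_p\beta_q\,d(p,q)\le 0$ with weights $(1,1,-1,-1,-1)$, check it on each coordinate cut via $\bigl(\sum_{p\in S}\beta_p\bigr)\bigl(\sum_{p\notin S}\beta_p\bigr)=-\beta(\beta+1)\le 0$, and sum over the cuts given by the scale embedding. The paper does not prove this lemma; it only cites Tylkin and Deza--Laurent, and your argument is the standard cut-cone proof of validity of hypermetric inequalities given there.
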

	
	\section{Toroidal quadrilateral maps}\label{sec:torus}
	
	We first treat the two toroidal families in Thomassen's classification.  In $Q_{k,m,r}$, the index $r$ is taken  modulo $k$; in $Q_{k,m,e}$, the index $m$ is taken modulo $k$.
	
	For integers $k,m\ge2$ and $0\le r\le \lfloor k/2\rfloor$, let $Q_{k,m,r}$ have vertex set
	\[
	\{a_{p,j}:0\le p\le m-1,\ j\in\Z_k\}.
	\]
	Its edges are
	\[
	a_{p,j}a_{p,j+1}\quad(0\le p\le m-1),\qquad
	a_{p,j}a_{p+1,j}\quad(0\le p<m-1),
	\]
	together with the twisted boundary edges
	\[
	a_{0,j}a_{m-1,j+r}\qquad (j\in\Z_k).
	\]
	The case $r=0$ is the Cartesian product $C_k\square C_m$.  In fact, the mapping  $f: V(Q_{k,m,r}) \to V(Q_{k,m,k-r}$)  defined by $f(a_{n,i}) = a_{n,(k-i-1)}$ ($n= 0,1,2,\dots,m-1$ and $i=0,1,2,\dots,k-1$)  is an isomorphism between $Q_{k,m,r}$ and $Q_{k,m,k-r}$, so the displayed range for $r$ is sufficient.
	
	For $k\ge3$ and $1\le m\le \lfloor k/2\rfloor$, the graph $Q_{k,m,e}$ has vertex set $\Z_k$ and edges joining $i$ to $i\pm1$ and to $i\pm m$.  Thus
	\begin{equation}\label{eq:circulant}
		Q_{k,m,e}\cong \Circ(k,\{\pm1,\pm m\}).
	\end{equation}
	If $m=1$, parallel edges are suppressed and the underlying simple graph is the cycle $C_k$.
	
	Figure~\ref{fig:torus-constructions} gives the two toroidal constructions used throughout this section.  The left panel shows the finite square grid used as a fundamental domain for $Q_{k,m,r}$; the shifted boundary edges that occur when $r\ne0$ are illustrated in Figure~\ref{fig:qkmr-propagation}.  The right panel shows the circulant model for $Q_{k,m,e}$.
	
	\begin{figure}[htbp]
		\centering
		\begin{minipage}{0.55\textwidth}
			\centering
			\includegraphics[width=\linewidth]{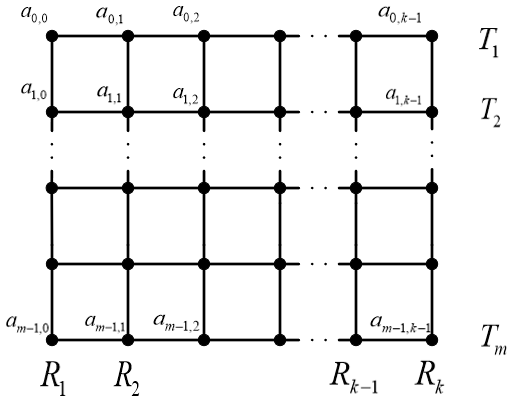}\\[-1mm]
			{\small (a) Grid model for $Q_{k,m,r}$.}
		\end{minipage}\hfill
		\begin{minipage}{0.38\textwidth}
			\centering
			\includegraphics[width=\linewidth]{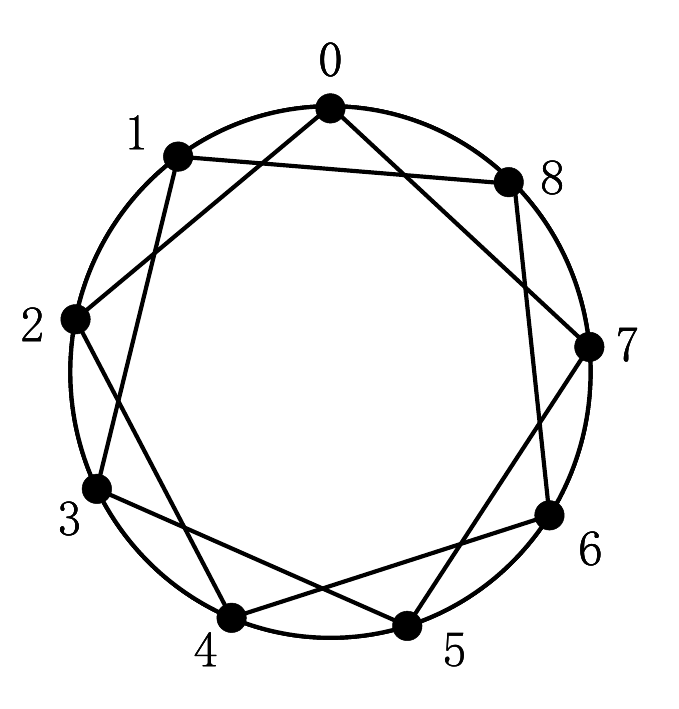}\\[-1mm]
			{\small (b) Example $Q_{9,2,e}=\operatorname{Circ}(9,\{\pm1,\pm2\})$.}
		\end{minipage}
		\caption{Torus models.  In (a), the ordinary square grid is shown before the boundary edges are inserted; adding untwisted cyclic edges in one direction and shifted edges in the other gives $Q_{k,m,r}$.  The graph $Q_{k,m,0}$ is the untwisted product $C_k\square C_m$.  In (b), $Q_{k,m,e}$ is viewed as a two-generator circulant graph.}
		\label{fig:torus-constructions}
	\end{figure}
	
	\subsection[The family Qkmr]{The family $Q_{k,m,r}$}
	
	\begin{proposition}\label{prop:qkmr-positive}
		The graph $Q_{k,m,r}$ is $\ell_1$-embeddable if $r=0$, or if $m=2$ and $r=1$.
	\end{proposition}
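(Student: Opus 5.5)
The plan is to treat the two cases separately and reduce each to an $\ell_1$-graph already covered by the preliminary lemmas. No label argument is needed here, since this is the positive direction.

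\emph{Case $r=0$.} By construction $Q_{k,m,0}$ is the Cartesian product $C_k\square C_m$. The boundary edges $a_{0,j}a_{m-1,j}$ close each column $a_{0,j}a_{1,j}\cdots a_{m-1,j}$ into a cycle $C_m$, and the rows are the cycles $C_k$. Lemma~\ref{lem:product} says that cycles are $\ell_1$-graphs and that Cartesian products of $\ell_1$-graphs are $\ell_1$-graphs, so $Q_{k,m,0}$ is $\ell_1$-embeddable. If a parameter equals $2$, then $C_2$ is read as its underlying simple graph $K_2$, which is a complete graph. Lemma~\ref{lem:product} therefore still applies, and these degenerate cases need no separate treatment.

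\emph{Case $m=2$, $r=1$.} The vertex set consists of two rows $a_{0,j}$ and $a_{1,j}$ with $j\in\Z_k$. Each row is a $k$-cycle. The remaining edges are the rungs $a_{0,j}a_{1,j}$ and the twisted edges $a_{0,j}a_{1,j+1}$, so $a_{0,j}$ is adjacent to both $a_{1,j}$ and $a_{1,j+1}$. I will relabel the vertices cyclically by
\[
\psi(a_{1,j})=2j,\qquad \psi(a_{0,j})=2j+1 \pmod{2k}.
\]
Under $\psi$, the rungs and twisted edges become exactly the edges $\{i,i+1\}$ of a Hamiltonian $2k$-cycle. The row edges $a_{p,j}a_{p,j+1}$ become the edges $\{i,i+2\}$. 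Hence
\[
Q_{k,2,1}\cong \Circ(2k,\{\pm1,\pm2\}).
\]
This graph is the $4$-regular antiprism $APrism_k$, with two $k$-gonal faces (the two rows) and $2k$ triangles. The main thing to verify carefully is that this identification matches the polyhedron $(4;3,k;2k,2)$ of Lemma~\ref{lem:aprisms}. Each triangle is $a_{0,j}a_{1,j}a_{1,j+1}$ or $a_{0,j}a_{0,j+1}a_{1,j+1}$, and the two $k$-gons are the rows, which confirms the match. For $k\ge3$, Lemma~\ref{lem:aprisms} then gives a scale-$2$ embedding into $Q_{k+1}$.

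The remaining edge case is $k=2$, where $r=1=\lfloor k/2\rfloor$ is allowed. The underlying simple graph is then $\Circ(4,\{\pm1,\pm2\})\cong K_4$, which is $\ell_1$ by Lemma~\ref{lem:product}. This completes both cases. I expect no real obstacle beyond checking the isomorphism with the antiprism and handling the degenerate small parameters.
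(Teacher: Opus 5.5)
Your proof is correct and follows the paper's argument. The case $r=0$ goes through $C_k\square C_m$ and Lemma~\ref{lem:product}, and the case $m=2$, $r=1$ through the antiprism $APrism_k$ and Lemma~\ref{lem:aprisms} for $k\ge3$, with $K_4$ when $k=2$; your relabelling $Q_{k,2,1}\cong\Circ(2k,\{\pm1,\pm2\})$ and your reading of $C_2$ as $K_2$ just make explicit details the paper leaves implicit.
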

	
	\begin{proof}
		If $r=0$, then $Q_{k,m,0}\cong C_k\square C_m$.  Cycles are $\ell_1$-graphs and Cartesian products preserve $\ell_1$-embeddability by Lemma~\ref{lem:product}.
		
		Now assume $m=2$ and $r=1$.  If $k=2$, then the underlying simple graph is $K_4$, hence is $\ell_1$-embeddable by Lemma~\ref{lem:product}.  If $k\ge3$, then $Q_{k,2,1}$ is the 4-regular bifaced polyhedron $APrism_k=(4;3,k;2k,2)$ in the notation of Deza and Grishukhin, and Lemma~\ref{lem:aprisms} applies.  Therefore $Q_{k,2,1}$ is an $\ell_1$-graph.
	\end{proof}
	
	We shall use the following elementary distance facts for the quotient grid.  The graph $Q_{k,m,r}$ is the quotient of the square lattice by the translations $(0,k)$ and $(m,r)$.  Therefore the distance between two vertices represented by lifts $(p,j)$ and $(p',j')$ is
	\begin{equation}\label{eq:quotient-distance}
		\min_{s,t\in\Z}\bigl(|p'-p+sm|+|j'-j+sr+tk|\bigr).
	\end{equation}
	A direct evaluation of \eqref{eq:quotient-distance} gives the following facts used below: all facial 4-cycles occurring in the grid are isometric; when $ r=\lfloor k/2\rfloor$, the cycle
	\begin{equation}\label{eq:long-cycle}
		C=a_{0,0}a_{0,1}\cdots a_{0,r}a_{1,r}\cdots a_{m-1,r}a_{0,0}
	\end{equation}
	is isometric; and in the special case $k \ge 2, m \ge 3,r=1$ or $k \ge 2, m \ge 2, 2 \le r \le \lfloor k/2 \rfloor$,   the boundary 4-cycles
	\[
	a_{0,i}a_{0,i+1}a_{m-1,i+2}a_{m-1,i+1}a_{0,i} ~( 0\le i \le k-1)
	\]
	are isometric.
	
	The two label-propagation patterns used in the proof are illustrated in Figure~\ref{fig:qkmr-propagation}.  The highlighted curves should be read schematically: each highlighted edge represents the same Shpectorov label after repeated applications of Lemma~\ref{lem:face-labels} to isometric quadrilateral faces.
	
	\begin{figure}[htbp]
		\centering
		\begin{minipage}{0.48\textwidth}
			\centering
			\includegraphics[width=\linewidth]{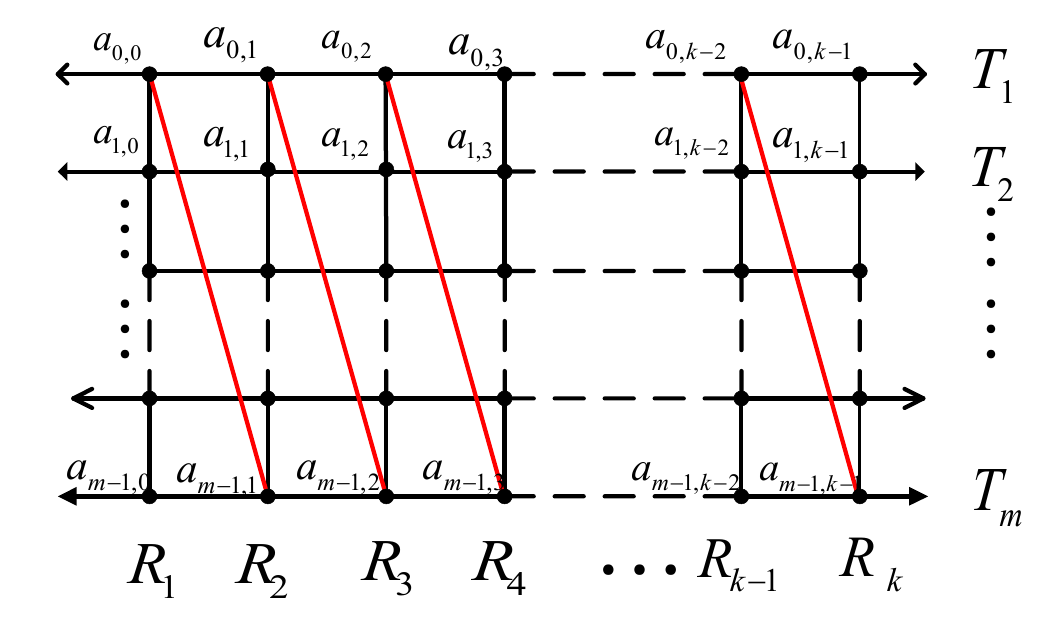}\\[-1mm]
			{\small (a) Unit shift $r=1$.}
		\end{minipage}\hfill
		\begin{minipage}{0.48\textwidth}
			\centering
			\includegraphics[width=\linewidth]{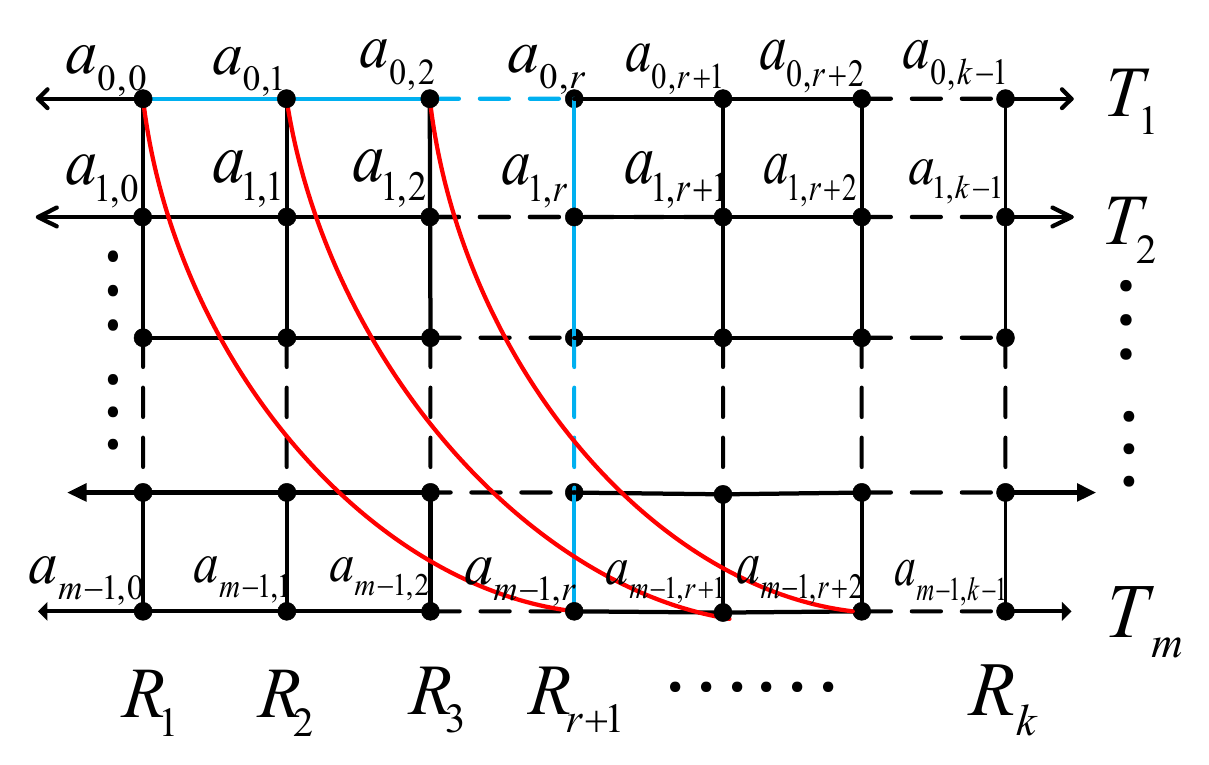}\\[-1mm]
			{\small (b) The long cycle for $2\le r\le\lfloor k/2\rfloor$.}
		\end{minipage}
		\caption{Label propagation in $Q_{k,m,r}$.  Panel (a) shows why, when $m\ge3$, the shift $r=1$ propagates a horizontal label until two incident edges receive the same label.  Panel (b) shows the shift $2\le r\le\lfloor k/2\rfloor$ and the isometric cycle $C$ used for $r= \lfloor k/2 \rfloor$; an opposite-edge relation on $C$ contradicts the disjointness forced by the intervening facial 4-cycles.}
		\label{fig:qkmr-propagation}
	\end{figure}
	
	\begin{theorem}\label{thm:qkmr}
		For $k,m\ge2$ and $0\le r\le \lfloor k/2\rfloor$, the graph $Q_{k,m,r}$ is an $\ell_1$-graph if and only if $r=0$, or $m=2$ and $r=1$.
	\end{theorem}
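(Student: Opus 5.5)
The "if" direction is Proposition~\ref{prop:qkmr-positive}. For the converse we assume $r\ge1$ and that $(m,r)\neq(2,1)$, and we derive a contradiction from a scale-$\lambda$ embedding $\varphi$ using Shpectorov labels. There are two cases: (i) $r=1$ and $m\ge3$; (ii) $2\le r\le\lfloor k/2\rfloor$ with $m\ge2$. In both cases the basic tool is Lemma~\ref{lem:face-labels}, applied to the facial 4-cycles of the grid, which we take to be isometric by \eqref{eq:quotient-distance}.

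The first step is to propagate labels through the grid faces. The facial squares $a_{p,j}a_{p,j+1}a_{p+1,j+1}a_{p+1,j}$ for $0\le p<m-1$ give $\ell(a_{0,j}a_{0,j+1})=\ell(a_{p,j}a_{p,j+1})$ for all $p$. Similarly, along each row, the vertical edges satisfy $\ell(a_{p,j}a_{p+1,j})=\ell(a_{p,j+1}a_{p+1,j+1})$. The boundary faces $a_{0,j}a_{0,j+1}a_{m-1,j+1+r}a_{m-1,j+r}$ are also quadrilaterals (faces of the torus). Whenever they are isometric, they give $\ell(a_{0,j}a_{0,j+1})=\ell(a_{m-1,j+r}a_{m-1,j+r+1})=\ell(a_{0,j+r}a_{0,j+r+1})$. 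So the horizontal label $h_j:=\ell(a_{0,j}a_{0,j+1})$ is periodic in $j$ with period $\gcd(r,k)$.

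Case (i), $r=1$ and $m\ge3$. Here the boundary 4-cycles displayed above are isometric, so all horizontal edges carry a single label $h$. But $a_{0,0}a_{0,1}$ and $a_{0,1}a_{0,2}$ are distinct incident edges with the same label, which contradicts Lemma~\ref{lem:incident}. (The contradiction already arises from the face-disjointness clause, since consecutive horizontal edges lie on a shortest path whenever $k\ge3$. For $k=2$ I expect the underlying simple-graph degeneracy to need a small separate check, using $a_{0,0}a_{0,1}$ together with its parallel copy and a vertical edge.) The failure for $m=2$ matches the fact that the boundary squares are then not isometric: $a_{0,i}$ and $a_{1,i+2}$ are at distance at most $1$ via the twisted edge.

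Case (ii), $2\le r\le\lfloor k/2\rfloor$. Isometry of the boundary squares gives $h_j=h_{j+r}$. If $\gcd(r,k)<r$, we may pick $j$ and $j'$ with $h_j=h_{j'}$ where $j'$ lies strictly between $j$ and $j+r$ (after reduction). The path $a_{0,j}a_{0,j+1}\cdots a_{0,j+r}$ has length $r\le\lfloor k/2\rfloor$, and I would verify with \eqref{eq:quotient-distance} that it is a shortest path. Lemma~\ref{lem:path-labels} then makes its labels pairwise disjoint, which is impossible. The remaining subcase is $r\mid k$ with labels only $r$-periodic. For it we would use the long cycle $C$ of \eqref{eq:long-cycle}, which is isometric for $r=\lfloor k/2\rfloor$. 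On $C$, Lemma~\ref{lem:isometric-cycle} pairs opposite edges. The edges of $C$ are $r$ horizontal edges, $m-1$ vertical edges and the twisted edge $a_{m-1,r}a_{0,0}$. The boundary faces give the twisted edge $a_{0,0}a_{m-1,r}$ the same label as the vertical column of edges $a_{p,0}a_{p+1,0}$; column propagation then transports this label to the vertical edges $a_{p,r}a_{p+1,r}$ lying on $C$. So a vertical edge and the twisted edge of $C$ share a label while being non-opposite or incident, contradicting part (3) of Lemma~\ref{lem:isometric-cycle}. For general $r$ with $r<\lfloor k/2\rfloor$, I would replace $C$ by the analogous cycle that closes after $\lfloor k/2\rfloor$ horizontal steps, or reduce to a shortest path that meets two equal vertical labels.

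The main obstacle is the distance verification: checking from \eqref{eq:quotient-distance} that the boundary squares, the horizontal paths and $C$ are isometric uniformly in $k,m,r$. This is especially delicate for small $k$ (where $r=k/2$ or $k\le4$) and for $m=2$, where the twisted and ordinary vertical edges create shortcuts. I would handle these by taking the minimizing $(s,t)$ in \eqref{eq:quotient-distance} as only $s\in\{-1,0,1\}$, with $t$ chosen optimally.
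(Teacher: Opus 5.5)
Your case (i) for $k\ge3$ matches the paper, but case (ii) has a broken step and an unproved subcase, and case (i) leaves $k=2$ open.

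\textbf{The long-cycle step for $r=k/2$ is unfounded.} It rests on a label identification that the face relations do not give. On interior faces, Lemma~\ref{lem:face-labels} equates a vertical edge only with vertical edges between the same two rows. On the boundary faces $a_{0,j}a_{0,j+1}a_{m-1,j+r+1}a_{m-1,j+r}$ it pairs the two horizontal edges with each other and the two twisted edges with each other. So the twisted edges form their own class, and nothing relates $\ell(a_{m-1,r}a_{0,0})$ to $\ell(a_{p,0}a_{p+1,0})$. If such an identification were forced, it would already contradict Lemma~\ref{lem:incident} at $a_{0,0}$ with no cycle needed. The same reasoning would also apply verbatim to $C_k\square C_m$ with $m\ge3$, which is an $\ell_1$-graph. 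The contradiction on $C$ must come from disjointness instead:
\begin{itemize}
\item The shortest paths $a_{j,0}a_{j,1}a_{j+1,1}$ plus propagation give $\ell(a_{0,0}a_{0,1})\cap\ell(a_{j,r}a_{j+1,r})=\varnothing$.
\item Likewise $\ell(a_{0,r}a_{1,r})\cap\ell(a_{0,i}a_{0,i+1})=\varnothing$ for $0\le i<r$.
\end{itemize}
Since $|C|=r+m\ge4$, the twisted edge is never opposite $a_{0,0}a_{0,1}$. If an opposite edge is vertical, Lemma~\ref{lem:isometric-cycle} contradicts the first relation. Otherwise every edge opposite $a_{0,r}a_{1,r}$ is horizontal and contradicts the second. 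This is how the paper concludes.

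\textbf{The subcase $r\mid k$, $2\le r<\lfloor k/2\rfloor$ is not proved} (e.g.\ $Q_{9,2,3}$, $Q_{12,2,4}$). A cycle through one twisted edge needs $r$ or $k-r$ horizontal steps, so no cycle closes after $\lfloor k/2\rfloor$ of them. Your ``two equal vertical labels'' alternative is not substantiated. The missing idea is that $r$-periodicity of the $h_j$ suffices on its own, with no gcd split. Lemma~\ref{lem:path-labels} on the path $a_{0,0}a_{0,1}\cdots a_{0,2r}$ gives $\varphi(a_{0,0})\symdiff\varphi(a_{0,2r})=(h_0\symdiff\cdots\symdiff h_{r-1})\symdiff(h_r\symdiff\cdots\symdiff h_{2r-1})$, which is $\varnothing$ because $h_{j+r}=h_j$. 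But $a_{0,2r}\neq a_{0,0}$ since $0<2r<k$, a contradiction. This covers every $2\le r<k/2$ and leaves only $k=2r$ for $C$.

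Your gcd route also misstates a distance. The path $a_{0,j}\cdots a_{0,j+r}$ is not shortest when $m<r$: the twisted edge at $a_{0,j}$ followed by $m-1$ vertical edges reaches $a_{0,j+r}$ in $m$ steps. You would need the subpath of length $\gcd(r,k)+1$ instead.

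\textbf{The case $k=2$, $r=1$, $m\ge3$ is not a formality.} The simple graph is the M\"obius ladder $\Circ(2m,\{\pm1,m\})$, whose rungs are the horizontal edges. Your propagation gives all rungs one label, but distinct rungs are never incident, so Lemma~\ref{lem:incident} yields nothing, and there is no ``parallel copy'' to label. The paper instead uses the 5-gonal inequality with $x=a_{0,0}$, $y=a_{1,1}$, $a=a_{0,1}$, $b=a_{1,0}$, $c=a_{s,1}$, $s=\lfloor (m+1)/2\rfloor$.
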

	
	\begin{proof}
		The positive direction is Proposition~\ref{prop:qkmr-positive}.  Conversely, assume that $Q_{k,m,r}$ is $\ell_1$-embeddable with scale $\lambda$, and suppose that it is not one of the positive cases.
		
		First let $r=1$.  Then $m\ge3$.  If $k=2$, choose
		\[
		x=a_{0,0},\quad y=a_{1,1},\quad a=a_{0,1},\quad b=a_{1,0},\quad c=a_{s,1},
		\qquad s=\Big\lfloor\frac{m+1}{2}\Big\rfloor.
		\]
		A direct distance calculation gives a violation of Lemma~\ref{lem:5gonal}: if $m=2q$, then the two sides are $2q+4$ and $2q+3$, while if $m=2q+1$, they are $2q+6$ and $2q+4$.  Hence this degenerate case is not $\ell_1$-embeddable.
		
		Assume now that $k\ge3$.  By the isometric boundary 4-cycles and facial 4-cycles, Lemma~\ref{lem:face-labels} propagates the horizontal label as follows:
		\[
		\ell(a_{0,0}a_{0,1})=\ell(a_{m-1,1}a_{m-1,2})=\ell(a_{m-2,1}a_{m-2,2})=\cdots=\ell(a_{0,1}a_{0,2}).
		\]
		The two edges $a_{0,0}a_{0,1}$ and $a_{0,1}a_{0,2}$ are adjacent and distinct, but they have the same edge label, which  contradicting Lemma~\ref{lem:incident}.  Thus no graph with $r=1$ and $m\ge3$ is $\ell_1$-embeddable.
		
		It remains to consider $ m \ge 2, 2\le r\le\lfloor k/2\rfloor-1$, which induced $k \ge 6$.  
		
		The facial 4-cycles imply
		\begin{equation}\label{eq:qkmr-horizontal-propagation}
			\ell(a_{0,0}a_{0,1})=\ell(a_{1,0}a_{1,1})=\cdots=\ell(a_{m-1,0}a_{m-1,1}),
		\end{equation}
		and, for $1\le j\le m-1$,
		\begin{equation}\label{eq:qkmr-vertical-propagation}
			\ell(a_{j-1,0}a_{j,0})=\ell(a_{j-1,1}a_{j,1})=\cdots=\ell(a_{j-1,r}a_{j,r}).
		\end{equation}
		
		Let
		\begin{equation}
			\ell(a_{0,0}a_{0,1})=A, \ell(a_{0,1}a_{0,2})=B,\dots, \ell(a_{0,r-1}a_{0,r})=R.
		\end{equation}
		By the isometric boundary 4-cycles and facial 4-cycles, Lemma~\ref{lem:face-labels}
		
		\begin{equation*}
			\begin{aligned}
				\ell(a_{0,0}a_{0,1})=\ell(a_{m-1,r}a_{m-1,r+1})=&\ell(a_{m-2,r}a_{m-2,r+1})=\dots= \ell(a_{0,r}a_{0,r+1})=A,\\
				\ell(a_{0,1}a_{0,2})=\ell(a_{m-1,r+1}a_{m-1,r+2})=&\ell(a_{m-2,r+1}a_{m-2,r+2})=\dots= \ell(a_{0,r+1}a_{0,r+2})=B,\\
				&\vdots \\
				\ell(a_{0,r-1}a_{0,r})=\ell(a_{m-1,2r-1}a_{m-1,2r})=&\ell(a_{m-2,2r-1}a_{m-2,2r})=\dots= \ell(a_{0,2r-1}a_{0,2r})=R.
			\end{aligned}
		\end{equation*}
		Therefore, by Lemma \ref{lem:path-labels}, we know that:

		\begin{equation*}
			\begin{aligned}
				\left|\phi ({a_{0,0}})\triangle \phi ({a_{0,2r}}) \right|&=\left|\ell({a_{0,0}}{a_{0,1}})\triangle \ell({a_{0,1}}{a_{0,2}}) \triangle  \cdots \triangle \ell({a_{0,2r - 1}}{a_{0,2r}}) \right|\\
				&=\left| A \triangle B   \cdots \triangle R \triangle A \cdots \triangle R\right| \\
				&=0
			\end{aligned}
		\end{equation*}
		which yields a contradiction. By a similar argument, the case  $r=\lfloor k/2\rfloor$ with $k$ is odd can be treated analogously.
		
		Finally consider $ m \ge 2, 1 \ne r=\lfloor k/2\rfloor$, which induced $k \ge 4$.
		
		Since the two-edge paths $a_{j,0}a_{j,1}a_{j+1,1}$ are shortest, Lemma~\ref{lem:path-labels} and \eqref{eq:qkmr-horizontal-propagation}--\eqref{eq:qkmr-vertical-propagation} give
		\begin{equation}\label{eq:qkmr-disjoint-vertical}
			\ell(a_{0,0}a_{0,1})\cap \ell(a_{j,r}a_{j+1,r})=\varnothing\qquad (0\le j\le m-2).
		\end{equation}
		Similarly, using the shortest two-edge paths along the top row and the first vertical column,
		\begin{equation}\label{eq:qkmr-disjoint-horizontal}
			\ell(a_{0,r}a_{1,r})\cap \ell(a_{0,i}a_{0,i+1})=\varnothing\qquad (0\le i\le r-1).
		\end{equation}
		
		Now use the isometric cycle $C$ in \eqref{eq:long-cycle}.  If an edge opposite $a_{0,0}a_{0,1}$ in $C$ lies on the vertical side of $C$, then Lemma~\ref{lem:isometric-cycle} gives a non-empty intersection, of size $\lambda$ when $C$ is even and $\lambda/2$ when $C$ is odd, between $\ell(a_{0,0}a_{0,1})$ and the label of an edge $a_{j,r}a_{j+1,r}$.  This contradicts \eqref{eq:qkmr-disjoint-vertical}.  Otherwise, an edge opposite $a_{0,r}a_{1,r}$ lies on the top side of $C$, and Lemma~\ref{lem:isometric-cycle} contradicts \eqref{eq:qkmr-disjoint-horizontal}.  In both cases we obtain a contradiction.  Hence no remaining $Q_{k,m,r}$ is $\ell_1$-embeddable.
	\end{proof}
	
	\subsection[The family Qkme]{The family $Q_{k,m,e}$}
	
	\begin{proposition}\label{prop:qkme-positive}
		The following graphs in the family $Q_{k,m,e}$ are $\ell_1$-embeddable:
		\[
		Q_{k,1,e},\qquad Q_{5,2,e},\qquad Q_{2i,2,e}\quad (i\ge2).
		\]
	\end{proposition}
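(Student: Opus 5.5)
The plan is to identify each listed graph with a graph already known to be $\ell_1$-embeddable, using the circulant description \eqref{eq:circulant}. We then invoke Lemma~\ref{lem:product} or Lemma~\ref{lem:aprisms}. No label arguments are needed; the whole proof is a matter of recognising isomorphism types. The three families are treated in turn.

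\emph{The case $m=1$.} Here $Q_{k,1,e}\cong\Circ(k,\{\pm1\})$ once the parallel edges are suppressed, as stipulated after \eqref{eq:circulant}. So the underlying simple graph is the cycle $C_k$ with $k\ge3$, and Lemma~\ref{lem:product} applies directly.

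\emph{The case $(k,m)=(5,2)$.} Since $\{\pm1,\pm2\}=\Z_5\setminus\{0\}$, every pair of distinct vertices of $\Circ(5,\{\pm1,\pm2\})$ is adjacent. Hence $Q_{5,2,e}\cong K_5$, which is $\ell_1$-embeddable by Lemma~\ref{lem:product}.

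\emph{The case $k=2i$, $m=2$.} For $i=2$ we have $\{\pm1,\pm2\}=\Z_4\setminus\{0\}$, so $Q_{4,2,e}\cong K_4$, again covered by Lemma~\ref{lem:product}. For $i\ge3$, split $\Z_{2i}$ into the even residues $0,2,\dots,2i-2$ and the odd residues $1,3,\dots,2i-1$.
\begin{itemize}
\item The $\pm2$ edges form two disjoint $i$-cycles, $0\,2\,4\cdots(2i-2)$ and $1\,3\cdots(2i-1)$.
\item The $\pm1$ edges join each even vertex $2t$ to the odd vertices $2t-1$ and $2t+1$. These edges form the zigzag $0,1,2,3,\dots$ between the two $i$-cycles.
\end{itemize}
This is exactly the antiprism on two $i$-gons: every vertex has degree $4$, the two $i$-gons are faces, and the $2i$ triangles are $\{2t,2t+1,2t+2\}$ and $\{2t+1,2t+2,2t+3\}$. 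In Deza--Grishukhin notation this is $APrism_i=(4;3,i;2i,2)$. Lemma~\ref{lem:aprisms} then gives a scale-$2$ embedding into $Q_{i+1}$.

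As a cross-check, relabel $a_{0,t}\mapsto 2t$ and $a_{1,t}\mapsto 2t+1$ in $Q_{i,2,1}$. Under this relabelling, the rows become the $\pm2$ cycles, the rungs $a_{0,t}a_{1,t}$ become the edges $\{2t,2t+1\}$, and the twisted edges $a_{0,t}a_{1,t+1}$ become $\{2t,2t+3\}$. The latter would correspond to a $\pm3$ step, so this naive identification fails. One should therefore instead use the relabelling $a_{1,t}\mapsto 2t-1$. Then the rungs go to $\{2t,2t-1\}$ and the twisted edges go to $\{2t,2t+1\}$, so $Q_{2i,2,e}\cong Q_{i,2,1}$. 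This also lets us cite Proposition~\ref{prop:qkmr-positive} directly.

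The only point needing care is this last isomorphism, namely matching the $\pm1$ edges to the antiprism zigzag with the correct index shift. I would write the bijection explicitly and verify all three edge classes, so that no off-by-one error survives.
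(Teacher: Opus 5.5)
Your proof is correct and is essentially the paper's. The paper handles $m=1$ and $(k,m)=(5,2)$ exactly as you do, and treats $Q_{2i,2,e}$ through the isomorphism $Q_{2i,2,e}\cong Q_{i,2,1}$ (with $a_{1,t}\leftrightarrow 2t$, $a_{0,t}\leftrightarrow 2t+1$) followed by Proposition~\ref{prop:qkmr-positive}; that is your corrected cross-check, and your direct identification with $APrism_i$ (or $K_4$ when $i=2$) simply inlines that proposition's proof. In the write-up, drop the failed naive relabelling and keep only the bijection $a_{0,t}\mapsto 2t$, $a_{1,t}\mapsto 2t-1$, which does check out on all three edge classes.
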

	
	\begin{proof}
		The graph $Q_{k,1,e}$ is the cycle $C_k$ after parallel edges are suppressed, and $Q_{5,2,e}$ is $K_5$, because the generators $\pm1$ and $\pm2$ give all non-zero elements of $\Z_5$.  Both are $\ell_1$-graphs by Lemma~\ref{lem:product}.
		
		For $i\ge2$, define a map from $Q_{2i,2,e}$ to $Q_{i,2,1}$ by
		\[
		2t\longmapsto a_{1,t},\qquad 2t+1\longmapsto a_{0,t}\qquad (t\in\Z_i).
		\]
		The edges of $Q_{2i,2,e}$ have differences $\pm1$ and $\pm2$.  Under this map, the $\pm2$ edges become horizontal edges in one of the two rows of $Q_{i,2,1}$, while the $\pm1$ edges become the vertical or shifted boundary edges of $Q_{i,2,1}$.  Thus $Q_{2i,2,e}\cong Q_{i,2,1}$, which is $\ell_1$-embeddable by Proposition~\ref{prop:qkmr-positive}.
	\end{proof}
	
	\begin{proposition}\label{prop:q72e}
		The graph $Q_{7,2,e}$ is not $\ell_1$-embeddable.
	\end{proposition}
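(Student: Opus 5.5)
The plan is to use the 5-gonal inequality, Lemma~\ref{lem:5gonal}, as the obstruction. This is the mechanism the introduction reserves for the remaining circulant cases. By \eqref{eq:circulant}, $Q_{7,2,e}\cong\Circ(7,\{\pm1,\pm2\})$. Two vertices $i,j\in\Z_7$ are therefore adjacent exactly when $i-j\in\{\pm1,\pm2\}$. Otherwise $i-j\in\{\pm3\}$, and then $d(i,j)=2$, because $3=1+2$. So the graph has diameter $2$, and its distance-$2$ pairs form the $7$-cycle $\Circ(7,\{\pm3\})$. Every distance needed for the inequality can thus be read off from whether the difference is $\pm3$.

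To violate the inequality we want the left-hand side as large as possible and the right-hand side as small as possible. The right-hand side is a sum of six distances, so it is at least $6$. It equals $6$ exactly when each of $x,y$ is adjacent to all of $a,b,c$.

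On the left, $a,b,c$ cannot be pairwise at distance $2$, since that would require a triangle in the complement cycle $C_7$. The best we can hope for is $d(a,b)+d(a,c)+d(b,c)=5$, obtained when $a,b,c$ form a path of length two in the complement. Together with $d(x,y)=2$, the left-hand side then equals $7$.

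Concretely, I would take $a=0$, $b=3$, $c=6$. Then $d(0,3)=d(3,6)=2$ and $d(0,6)=1$. The common neighbourhood of $\{0,3,6\}$ is $N(0)\cap N(3)\cap N(6)=\{1,2,5,6\}\cap\{1,2,4,5\}\cap\{4,5,0,1\}=\{1,5\}$. Set $x=1$ and $y=5$. Their difference is $4\equiv-3$, so $d(x,y)=2$. The left-hand side is $2+2+1+2=7$, while the right-hand side is $6$, which contradicts Lemma~\ref{lem:5gonal}. Hence $Q_{7,2,e}$ is not $\ell_1$-embeddable.

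The only step needing care is the choice of the five vertices. Once the complement is recognised as a $7$-cycle, the choice is forced as described. The verification then reduces to checking a handful of differences modulo $7$, and I expect no real obstacle.
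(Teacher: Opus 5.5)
Your proof is correct and takes the same approach as the paper. The paper also derives a $7>6$ violation of the 5-gonal inequality (Lemma~\ref{lem:5gonal}), using $x=0$, $y=3$, $a=1$, $b=2$, $c=5$; translating by $-2$ in $\mathbb{Z}_7$ turns this into exactly your choice $x=1$, $y=5$, $a=0$, $b=3$, $c=6$.
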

	
	\begin{proof}
		In $Q_{7,2,e}$ take $x=0$, $y=3$, $a=1$, $b=2$, and $c=5$.  The left-hand side of the 5-gonal inequality is
		\[
		d(0,3)+d(1,2)+d(1,5)+d(2,5)=2+1+2+2=7,
		\]
		whereas the right-hand side is
		\[
		d(0,1)+d(0,2)+d(0,5)+d(3,1)+d(3,2)+d(3,5)=1+1+1+1+1+1=6.
		\]
		This violates Lemma~\ref{lem:5gonal}.
	\end{proof}
	
	\begin{proposition}\label{prop:q92e}
		The graph $Q_{9,2,e}$ is not $\ell_1$-embeddable.
	\end{proposition}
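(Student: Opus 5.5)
The plan is to rule out a scale embedding by Shpectorov labels, since the 5-gonal inequality of Lemma~\ref{lem:5gonal} does not seem to be violated here. In $Q_{9,2,e}=\Circ(9,\{\pm1,\pm2\})$ we have $d(i,j)=1$ if $j-i\equiv\pm1,\pm2$ and $d(i,j)=2$ if $j-i\equiv\pm3,\pm4 \pmod 9$. Any three vertices at pairwise distance $2$ form a coset $\{t,t+3,t+6\}$. For such a triple $a,b,c$, every vertex has distance $1$ to at most two of them. Hence both sums $d(x,a)+d(x,b)+d(x,c)$ and $d(y,a)+d(y,b)+d(y,c)$ are at least $4$, so the right-hand side is at least $8$, which is the maximum of the left-hand side. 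A short check of the remaining configurations, where $a,b,c$ are not pairwise at distance $2$, should show that Lemma~\ref{lem:5gonal} always holds. I would therefore not rely on it (a $7$-gonal hypermetric inequality is a fallback).

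So assume a scale-$\lambda$ embedding $\varphi$ and study the labels $\ell(uv)$. The graph has diameter $2$, so its isometric cycles have length at most $5$.

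\textbf{Step 1 (triangles).} The triangles $\{i,i+1,i+2\}$ are isometric odd cycles. Part~2 of Lemma~\ref{lem:isometric-cycle} then gives $|\ell(i,i+1)\cap\ell(i+1,i+2)|=\lambda/2$, and the same for the other two pairs of edges in each triangle. Moreover $\ell(i,i+2)\subseteq \ell(i,i+1)\cup\ell(i+1,i+2)$, so $\ell(i,i+2)=\ell(i,i+1)\symdiff\ell(i+1,i+2)$ by Lemma~\ref{lem:path-labels}.

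\textbf{Step 2 (shortest paths).} All $\pm2$-labels are thus determined by the nine "short" labels $L_i=\ell(i,i+1)$. Writing $L_i=X_i\cup Y_i$ with $X_i=L_i\cap L_{i-1}$ and $Y_i=L_i\cap L_{i+1}$, each of size $\lambda/2$, the overlaps chain around $\Z_9$. Next I would use the shortest paths of length $2$ between vertices at distance $2$, namely $i,i+2,i+4$ and $i,i+1,i+3$. Lemma~\ref{lem:path-labels} forces disjointness, for example $\ell(i,i+2)\cap\ell(i+2,i+4)=\varnothing$, i.e.
\[
(L_i\symdiff L_{i+1})\cap(L_{i+2}\symdiff L_{i+3})=\varnothing ,
\]
and $L_i\cap(L_{i+1}\symdiff L_{i+2})=\varnothing$. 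The latter already says that $L_i\cap L_{i+2}\subseteq L_{i+1}$.

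\textbf{Step 3 (parity closure).} I expect the decomposition to become rigid: $Y_i=X_{i+1}$ and the $X_i$ are pairwise disjoint except for consecutive indices. Hence $L_i\cap L_{i+1}=X_{i+1}$, and the blocks $X_1,\dots,X_9$ are mutually disjoint. Then $\varphi(0)\symdiff\varphi(4)=L_0\symdiff L_1\symdiff L_2\symdiff L_3=X_0\cup X_4$ has size $\lambda$, contradicting $d(0,4)=2$. Equivalently, going around the cycle of length $9$ in the $\pm1$ direction must return to $\varnothing$, but each block appears once, which is impossible for an odd cycle of blocks. This is the same parity phenomenon that makes $Q_{2i,2,e}$ embeddable and $Q_{2i+1,2,e}$ fail.

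The main obstacle is justifying the rigidity in Step 3: that $L_i\cap L_{i+2}$ is empty, or at least contained in $X_{i+1}$. This must come from combining several shortest-path disjointness relations, such as the paths $i,i+1,i+3$ and $i-1,i+1,i+2$ together with the distance equation $|\varphi(i)\symdiff\varphi(i+4)|=2\lambda$. I would first try to prove $|L_i\cap L_{i+2}|=0$ directly from $L_i\cap L_{i+2}\subseteq L_{i+1}$ and the size counts. If that fails, I would fall back on the isometric $5$-cycle $0,2,4,6,8$ (its steps of $\pm2$ form a $9$-cycle, but sub-$5$-cycles such as $0,2,4,5,7$ are candidates to check for isometry) and use the opposite-edge conditions of Lemma~\ref{lem:isometric-cycle}.
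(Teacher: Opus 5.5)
There is a genuine gap in Step~3. The rigidity you anticipate is not merely unproved; it is the opposite of what your own Step~2 relations force.

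The decomposition $L_i=X_i\cup Y_i$ presupposes $X_i\cap Y_i=L_{i-1}\cap L_i\cap L_{i+1}=\varnothing$, which you never justify. The relation $L_i\cap(L_{i+1}\triangle L_{i+2})=\varnothing$, from the shortest path $i,i+1,i+3$, says that every element of $L_i$ lies in both or in neither of $L_{i+1},L_{i+2}$. You extracted only the half $L_i\cap L_{i+2}\subseteq L_{i+1}$. The other half, $L_i\cap L_{i+1}\subseteq L_{i+2}$, gives $Y_i\subseteq L_{i+1}\cap L_{i+2}=Y_{i+1}$, hence $Y_i=Y_{i+1}$ by the size count. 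So all the sets $X_i=Y_{i-1}$ and $Y_i$ coincide with one set $Y$ of size $\lambda/2$. It follows that $L_i\cap L_{i+2}=L_i\cap L_{i+1}=Y$ has size $\lambda/2$.

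Consequently:
\begin{itemize}
\item The emptiness of $L_i\cap L_{i+2}$ and the pairwise disjointness of the $X_i$ cannot be obtained from size counts as you propose.
\item The computation $\varphi(0)\triangle\varphi(4)=X_0\cup X_4$ rests on a configuration the constraints exclude.
\item Your ``equivalently'' remark fails on its own terms. If $L_i=X_i\cup X_{i+1}$ with disjoint blocks, each block lies in exactly two labels around the $9$-cycle, so $L_0\triangle\cdots\triangle L_8=\varnothing$ and there is no parity clash.
\item The 5-cycle fallback is too vague to close the gap.
\end{itemize}

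The missing step is simply to finish the computation above. Since $|L_i\cap L_{i+1}|=\lambda/2$ is an integer, $\lambda$ is even, so $Y\neq\varnothing$, and $Y\subseteq L_i$ for all nine $i$. The closed walk $0,1,\dots,8,0$ gives $L_0\triangle L_1\triangle\cdots\triangle L_8=\varphi(0)\triangle\varphi(0)=\varnothing$. Yet every element of $Y$ lies in nine of these labels, an odd number, which is a contradiction.

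This repaired argument is genuinely different from the paper's. The paper normalizes $\varphi(0)=\varnothing$, uses the isometric 5-cycles $0,2,3,5,7$ and $0,2,4,6,8$ to pin down $\varphi(3)$ and $\varphi(6)$, and derives $|\varphi(3)\triangle\varphi(6)|=\lambda$, against $d(3,6)=2$. Your repaired version needs only the triangles and the shortest paths $i,i+1,i+3$. It works verbatim for every odd $k\ge7$, where $d(i,i+3)=2$, so it would also replace the paper's separate 5-gonal arguments for $k=7$ and $k\ge11$. For even $k$ the common block $Y$ appears an even number of times and cancels, consistent with $Q_{2i,2,e}$ being $\ell_1$.
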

	
	\begin{proof}
		The graph is displayed in Figure~\ref{fig:torus-constructions}(b).
		Suppose that $Q_{9,2,e}$ admits a scale-$\lambda$ embedding $\varphi$ into a hypercube.  Translate the embedding so that $\varphi(0)=\varnothing$, and put
		\[
		A=\varphi(1),\qquad B=\varphi(2),\qquad C=\varphi(7),\qquad D=\varphi(8).
		\]
		Then $|A|=|B|=|C|=|D|=\lambda$.  The triangles $012$, $018$, and $078$ give
		\begin{equation}\label{eq:q9-triangles}
			|A\cap B|=|A\cap D|=|C\cap D|=\lambda/2.
		\end{equation}
		The paths $1-0-7$, $2-0-7$, and $2-0-8$ are shortest, so Lemma~\ref{lem:path-labels} gives
		\begin{equation}\label{eq:q9-disjoint}
			A\cap C=B\cap C=B\cap D=\varnothing.
		\end{equation}
		From \eqref{eq:q9-triangles}--\eqref{eq:q9-disjoint}, we have
		\begin{equation}\label{eq:q9-D}
			D=(A\setminus B)\cup (C\cap D).
		\end{equation}
		Indeed, $A\cap D\subseteq A\setminus B$ and both sets have size $\lambda/2$.
		
		The paths $0-1-3$ and $0-2-3$ are shortest.  Hence $A\cup B\subseteq\varphi(3)$ and $|\varphi(3)|=2\lambda$.  The cycle $0,2,3,5,7,0$ is an isometric 5-cycle, and the edges $23$ and $70$ are opposite.  Since $B\subseteq\varphi(3)$,
		\[
		\ell(23)=\varphi(2)\symdiff\varphi(3)=\varphi(3)\setminus B.
		\]
		Lemma~\ref{lem:isometric-cycle} therefore gives
		\[
		|\varphi(3)\cap C|=|\ell(23)\cap\ell(70)|=\lambda/2.
		\]
		Let $F=\varphi(3)\cap C$.  Since $C$ is disjoint from $A\cup B$ and $|A\cup B|=3\lambda/2$, we obtain
		\begin{equation}\label{eq:q9-phi3}
			\varphi(3)=(A\cup B)\cup F.
		\end{equation}
		
		Similarly, the paths $0-8-6$ and $0-7-6$ are shortest.  Hence $C\cup D\subseteq\varphi(6)$ and $|\varphi(6)|=2\lambda$.  The cycle $0,2,4,6,8,0$ is an isometric 5-cycle, and the edges $02$ and $68$ are opposite.  Since $D\subseteq\varphi(6)$,
		\[
		\ell(68)=\varphi(6)\setminus D,
		\]
		and Lemma~\ref{lem:isometric-cycle} gives
		\[
		|B\cap\varphi(6)|=|\ell(02)\cap\ell(68)|=\lambda/2.
		\]
		Let $G=B\cap\varphi(6)$.  Since $B$ is disjoint from $C\cup D$ and $|C\cup D|=3\lambda/2$, we obtain
		\begin{equation}\label{eq:q9-phi6}
			\varphi(6)=(C\cup D)\cup G.
		\end{equation}
		Using \eqref{eq:q9-D}, \eqref{eq:q9-phi3}, and \eqref{eq:q9-phi6},
		\[
		\varphi(3)=(A\setminus B)\cup B\cup F,
		\qquad
		\varphi(6)=(A\setminus B)\cup C\cup G.
		\]
		Here $F\subseteq C$ and $G\subseteq B$, with $|F|=|G|=\lambda/2$.  Therefore
		\[
		\varphi(3)\symdiff\varphi(6)=(B\setminus G)\cup(C\setminus F),
		\]
		and so $|\varphi(3)\symdiff\varphi(6)|=\lambda$.  By \eqref{eq:scale-embedding}, this gives $d(3,6)=1$.  But vertices $3$ and $6$ are not adjacent in $Q_{9,2,e}$, since their difference is $3\notin\{\pm1,\pm2\}\pmod 9$.  This contradiction proves the proposition.
	\end{proof}
	
	\begin{proposition}\label{prop:odd-cycle-square}
		For every odd integer $k\ge11$, the graph $Q_{k,2,e}$ is not an $\ell_1$-graph.
	\end{proposition}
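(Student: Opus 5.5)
The plan is to write $k=2n+1$ with $n\ge5$ and work in $\Circ(k,\{\pm1,\pm2\})$. There the distance between $i$ and $j$ is $\lceil \delta/2\rceil$, where $\delta$ is the cyclic distance $\min(|i-j|,k-|i-j|)$. I would split according to the parity of $n$, i.e.\ according to $k \bmod 4$, because the simple 5-point obstruction of Proposition~\ref{prop:q72e} generalizes to only one of the two residues.

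\emph{Case $k\equiv 3 \pmod 4$ ($n$ odd).} Copy the choice from Proposition~\ref{prop:q72e}: $x=0$, $y=3$, $a=1$, $b=2$, and take the antipodal vertex $c=n+2$ (for $k=7$ this is $c=5$). The small distances are $d(0,3)=2$ and $d(1,2)=d(0,1)=d(0,2)=d(3,1)=d(3,2)=1$. The cyclic distances from $c$ to $0,1,2,3$ are $n-1,n,n,n-1$. So Lemma~\ref{lem:5gonal} fails exactly when
\[
2\lceil n/2\rceil > 1+2\lceil (n-1)/2\rceil .
\]
For odd $n$ this reads $n+1>n$, which holds, so the inequality is violated. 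I would verify the distance formula once and then this case is routine.

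\emph{Case $k\equiv 1\pmod 4$ ($n$ even).} The same 5-point choice gives equality, so here I would generalize the label argument of Proposition~\ref{prop:q92e}. Normalize $\varphi(0)=\varnothing$ and set $A=\varphi(1)$, $B=\varphi(2)$, $C=\varphi(k-2)$, $D=\varphi(k-1)$. The triangles through $0$ and the shortest paths $1\!-\!0\!-\!(k-2)$, $2\!-\!0\!-\!(k-2)$, $2\!-\!0\!-\!(k-1)$ give, exactly as in \eqref{eq:q9-triangles}--\eqref{eq:q9-D}, the relations $|A\cap B|=|A\cap D|=|C\cap D|=\lambda/2$, the disjointness $A\cap C=B\cap C=B\cap D=\varnothing$, and $D=(A\setminus B)\cup(C\cap D)$.

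The natural isometric cycles in this graph are $0,2,4,\dots,2n,0$, of length $n+1$, together with their rotations; I checked isometry from the distance formula. Since $n$ is even these cycles are odd, so Lemma~\ref{lem:isometric-cycle}(2) applies to them just as it was applied to the 5-cycles for $k=9$. The strategy is to control $\varphi$ along the two geodesics leaving $0$. Along $0,2,4,\dots$ the $\pm2$-labels are pairwise disjoint by Lemma~\ref{lem:path-labels}, and similarly on the other side $0,k-2,k-4,\dots$. Using opposite-edge relations in these odd cycles and their translates, I would express $\varphi(2t)$ and $\varphi(k-2t)$ in terms of $A,B,C,D$ plus half-size pieces. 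I would then compute $|\varphi(u)\symdiff\varphi(v)|$ for a pair $u,v$ near the antipode, in analogy with $3,6$ for $k=9$, and show it differs from $\lambda d(u,v)$.

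\emph{Expected obstacle.} The main obstacle is this second case. For $k=9$ the two antipodal vertices were reached after a single extra step, whereas for general $n$ the bookkeeping must be done inductively. I would try to prove by induction on $t$ that $\varphi(2t)=B\cup B_2\cup\dots\cup B_t$ with pairwise disjoint labels $B_s$ of size $\lambda$, and symmetrically on the other side. The odd-cycle half-intersection relation then pins down how the last label on one side meets the last label on the other. The goal is to force the pair straddling the antipode to have symmetric difference strictly smaller than $\lambda$ times its distance, as in the $\lambda$ versus $2\lambda$ clash for $3,6$ in Proposition~\ref{prop:q92e}.

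If this induction becomes unwieldy, I would fall back on searching for a 5-point (or 7-point hypermetric) violation involving two antipodal vertices $n$ and $n+1$ together with $0,1,2$. That inequality is still fixed-size and would be checked in terms of $n$ alone.
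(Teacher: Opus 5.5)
Your case $k\equiv 3\pmod 4$ ($n$ odd) is exactly the paper's argument, with the same five points $x=0$, $y=3$, $a=1$, $b=2$, $c=n+2$, and it is correct. One small slip: for $n$ even this choice does not give equality but $L=n+3<R=n+4$.

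\textbf{The gap.} The case $k\equiv1\pmod4$, which for $k\ge11$ means $n=2q\ge6$, is not proved; what you give is a plan.
\begin{itemize}
\item The proposed induction $\varphi(2t)=B\cup B_2\cup\dots\cup B_t$ is just Lemma~\ref{lem:path-labels} applied to the geodesic $0,2,\dots,2t$. It imposes no new constraint.
\item The decisive step is never identified: how the half-intersection relations on the odd $(n+1)$-cycles interact with the many geodesics into vertices near the antipode, and which pair $u,v$ ends up with $|\varphi(u)\symdiff\varphi(v)|\ne\lambda d(u,v)$.
\item For $k=9$ this closes because $3$ and $6$ each have exactly two geodesic routes from $0$. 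For larger $n$ the labels can split into many pieces, and nothing in your sketch shows the bookkeeping closes.
\item Your fallback fails as stated. On $\{0,1,2,n,n+1\}$ the ten pairwise distances sum to $S=6q+3$. For a split $\{x,y\}\,|\,\{a,b,c\}$, the 5-gonal inequality is equivalent to $2\bigl(d(x,y)+d(a,b)+d(a,c)+d(b,c)\bigr)\le S$. Checking all ten splits, the left side is at most $6q+2$, so no violation arises there.
\end{itemize}

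\textbf{The missing idea.} Shift the five points. The paper takes $x=0$, $y=5$, $a=1$, $b=4$, $c=n+3$, and uses $d(u,v)=\lceil\delta(u,v)/2\rceil$.
\begin{itemize}
\item Left-hand side: $d(0,5)=3$, since the cyclic distance is $\min(5,2n-4)=5$ for $n\ge5$. Also $d(1,4)=2$, and $d(1,c)=d(4,c)=q$ (cyclic distance $n-1$). So $L=2q+5$.
\item Right-hand side: $d(0,1)=d(5,4)=1$, $d(0,4)=d(5,1)=2$, and $d(0,c)=d(5,c)=q-1$ (cyclic distance $n-2$). So $R=2q+4$.
\end{itemize}
This violates Lemma~\ref{lem:5gonal}. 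The choice fails only at $n=4$, where $d(0,5)=2$ and one gets $L=R=8$. That is why $k=9$ needs the separate label argument of Proposition~\ref{prop:q92e}. For the statement at hand ($k\ge11$), no label bookkeeping is needed: two explicit 5-point certificates, one per parity of $n$, suffice.
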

	
	\begin{proof}
		Write $k=2i+1$, so $i\ge5$, and denote $Q_{k,2,e}$ by $G_i$.  For $a,b\in\Z_{2i+1}$ let $\rho(a,b)$ be the cyclic distance in $C_{2i+1}$.  Since each edge of $G_i$ changes the cyclic position by $1$ or $2$,
		\begin{equation}\label{eq:qk2-distance}
			d_{G_i}(a,b)=\left\lceil\frac{\rho(a,b)}2\right\rceil.
		\end{equation}
		The lower bound follows because one step reduces cyclic distance by at most two, and the upper bound is obtained by using jumps of length two and, if necessary, one jump of length one along a shortest cyclic arc.
		
		If $i=2q+1$ is odd, choose
		\[
		x=0,\qquad y=3,\qquad a=1,\qquad b=2,\qquad c=i+2=2q+3.
		\]
		Using \eqref{eq:qk2-distance}, the left-hand side of the 5-gonal inequality is
		\[
		L=d(0,3)+d(1,2)+d(1,2q+3)+d(2,2q+3)=2+1+(q+1)+(q+1)=2q+5,
		\]
		whereas the right-hand side is
		\[
		R=d(0,1)+d(0,2)+d(0,2q+3)+d(3,1)+d(3,2)+d(3,2q+3)=1+1+q+1+1+q=2q+4.
		\]
		Thus $L>R$.
		
		If $i=2q$ is even, choose
		\[
		x=0,\qquad y=5,\qquad a=1,\qquad b=4,\qquad c=i+3=2q+3.
		\]
		Again by \eqref{eq:qk2-distance},
		\[
		L=d(0,5)+d(1,4)+d(1,2q+3)+d(4,2q+3)=3+2+q+q=2q+5,
		\]
		and
		
		\[
		\begin{aligned}
			R&=d(0,1)+d(0,4)+d(0,2q+3)+d(5,1)+d(5,4)+d(5,2q+3)\\
			&=1+2+(q-1)+2+1+(q-1)=2q+4.
		\end{aligned}
		\]
		Thus the 5-gonal inequality is violated in both parity cases.
	\end{proof}
	
	\begin{theorem}\label{thm:qkme}
		For $k\ge3$ and $1\le m\le\lfloor k/2\rfloor$, the graph $Q_{k,m,e}$ is $\ell_1$-embeddable if and only if
		\[
		m=1,
		\qquad (k,m)=(5,2),
		\qquad \text{or}\qquad m=2\text{ and }k\text{ is even with }k\ge4.
		\]
	\end{theorem}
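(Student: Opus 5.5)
The positive direction is exactly Proposition~\ref{prop:qkme-positive}, so the plan concerns only the negative direction. I split by $m$. For $m=1$ everything is positive. For $m=2$ we have $k\ge4$. The even case and $k=5$ are positive. Odd $k\ge7$ is settled by Propositions~\ref{prop:q72e}, \ref{prop:q92e} and \ref{prop:odd-cycle-square}, for $k=7$, $k=9$ and $k\ge11$ respectively. What remains is $m\ge3$, hence $k\ge 2m\ge 6$, and I aim to show that every such $Q_{k,m,e}$ is not an $\ell_1$-graph.

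\emph{Reduction of the case $k=2m+1$.} Here $k$ is odd, so $2$ is a unit in $\Z_k$. Multiplication by $2$ sends the generator $m$ to $2m\equiv-1$ and the generator $1$ to $2$. Hence $\Circ(2m+1,\{\pm1,\pm m\})\cong\Circ(2m+1,\{\pm1,\pm2\})=Q_{2m+1,2,e}$. Since $2m+1\ge7$ is odd, this graph is already excluded by the three propositions above.

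\emph{Label propagation for $k\ge 2m$, $k\neq 2m+1$.} Consider the 4-cycles $i,\,i+1,\,i+1+m,\,i+m$. Such a cycle is isometric precisely when $m+1$ and $m-1$ are not congruent to $0,\pm1,\pm m$ modulo $k$. I will check this directly.
\begin{itemize}
\item Since $m\ge3$ and $k\ge2m$, the element $m-1$ is never in this set.
\item The element $m+1$ could only equal $-m$, which happens iff $k=2m+1$, or equal $-1$, which happens iff $k=m+2$. The latter is impossible because $k\ge 2m$.
\end{itemize}
So, once $k=2m+1$ is removed, all these 4-cycles are isometric. Lemma~\ref{lem:face-labels} then gives, for every $i$,
\[
\ell(i,i+m)=\ell(i+1,i+1+m)\quad\text{and}\quad \ell(i,i+1)=\ell(i+m,i+m+1).
\]
By iterating the first relation, all $\pm m$-edges receive one common label. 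If $k>2m$, the edges $\{0,m\}$ and $\{m,2m\}$ are distinct and incident at $m$, because $2m\not\equiv0$. This contradicts Lemma~\ref{lem:incident}.

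\emph{The case $k=2m$.} Now the $\pm m$-edges form a perfect matching, so I use the other relation instead. Since $-m\equiv m$, it gives
\[
\ell(i,i+1)=\ell(i+m,i+m+1)=\ell(i-m,i-m+1).
\]
Consequently the labels of $\{0,1\}$ and $\{1,2\}$ need not yet coincide. The second step is to combine Lemma~\ref{lem:face-labels} with a 4-cycle through $\pm m$-edges in both orientations, namely $i,\,i+1,\,i+1-m,\,i-m$. Because $-m=m$, this is the same face, so it gives nothing new. The better tool is therefore the isometric cycle $0,1,\dots,m,\,0$, which has length $m+1$ and uses the chord $\{m,0\}$. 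This cycle is isometric because $d(0,j)=\min(j,\,m-j+1)$ on it. I would then compare its opposite-edge labels (Lemma~\ref{lem:isometric-cycle}) with the disjointness of labels along shortest paths (Lemma~\ref{lem:path-labels}), exactly as in the final step of Theorem~\ref{thm:qkmr}.

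I expect this case $k=2m$ to be the main obstacle. If the label argument stalls, I would instead look for a 5-gonal violation in the style of Proposition~\ref{prop:q72e}, using the explicit distance $d(a,b)=\min(\rho,\,m-\rho+1)$ with $\rho$ the cyclic distance. For example, I would try $x=0$, $y=2$, $a=1$, $b=m$, $c=m+1$, and adjust the choice after computing both sides.
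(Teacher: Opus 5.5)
\paragraph{A gap in the case $k=2m$.} Your treatment of $m=1$ and $m=2$ follows the paper's proof. So does your treatment of $k=2m+1$: multiplication by the unit $2$ is the paper's map $\tau(t)=2t-1$ up to a translation. The same holds for $k\ge 2m+2$, where you propagate $\ell(i,i+m)$ around the induced $4$-cycles until two $\pm m$-edges collide at the vertex $m$.

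The gap is the subcase $k=2m$ with $m\ge3$, namely the M\"obius ladder $\Circ(2m,\{\pm1,m\})$. There you never actually derive a contradiction. The label argument is only announced, and you say yourself you expect it may stall. Your fallback $5$-gonal certificate also fails. With $x=0$, $y=2$, $a=1$, $b=m$, $c=m+1$ and $d(0,t)=\min\{t,m+1-t\}$ for $0\le t\le m$, the left side is $2+2+1+1=6$. The right side is $1+1+2+1+\min\{m-2,3\}+2\ge 8$. So this choice gives no violation for any $m\ge3$. Nothing else in your plan covers this infinite family, so the negative direction is incomplete as written.

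\paragraph{How to close it.} Your first idea already works once carried out. For $k=2m$, $m\ge3$, you have checked that the faces $j,\,j+1,\,j+1+m,\,j+m$ are induced, hence isometric. Applying Lemma~\ref{lem:face-labels} to them gives two facts:
\begin{itemize}
\item $\ell(j,j+m)=\ell(j+1,j+1+m)$, so every matching edge carries the common label $\mu=\ell(0,m)$;
\item $\ell(j,j+1)\cap\mu=\varnothing$ for every $j$, because these are adjacent edges of an isometric $4$-cycle.
\end{itemize}
Now take the isometric $(m+1)$-cycle $0,1,\dots,m,0$. Its edge $\{m,0\}$ has one or two opposite edges, each of the form $\{j,j+1\}$ with $0\le j\le m-1$. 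Lemma~\ref{lem:isometric-cycle} then forces $\ell(j,j+1)=\mu$ when $m$ is odd, and $|\ell(j,j+1)\cap\mu|=\lambda/2>0$ when $m$ is even. Either way this contradicts the disjointness.

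The paper instead closes this case with the $5$-gonal inequality. It takes $a=0$, $b=r_0=\lfloor (m+1)/2\rfloor$, $c=m+1$, $x=1$, $y=m$. This gives $L=2r_0+4$ and $R=3+r_0+\lceil (m-1)/2\rceil$, so $L>R$ in both parities. One of these two arguments has to be supplied for the proof to be complete.
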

	
	\begin{proof}
		The positive cases are Proposition~\ref{prop:qkme-positive}.  Conversely, assume that $Q_{k,m,e}$ is not one of these cases.
		
		If $m=2$, then $k$ is odd and $k\ne5$.  The cases $k=7$ and $k=9$ are Propositions~\ref{prop:q72e} and~\ref{prop:q92e}; the cases $k\ge11$ follow from Proposition~\ref{prop:odd-cycle-square}.
		
		Assume next that $3\le m<\lfloor k/2\rfloor$.  Then $2m<k$.  For each $i\in\Z_k$, the cycle
		\[
		i,\ i+1,
		i+m+1,
		i+m,
		i
		\]
		is an induced, hence isometric, 4-cycle.  Therefore Lemma~\ref{lem:face-labels} gives
		\[
		\ell(0,m)=\ell(1,m+1)=\cdots=\ell(m,2m).
		\]
		The first and last edges in this chain are incident at the vertex $m$, contradicting Lemma~\ref{lem:incident}.
		
		It remains to consider $m=\lfloor k/2\rfloor$.  If $k=2i$ is even, then $m=i$.  The graph $Q_{2i,i,e}$ is the cycle $C_{2i}$ together with the perfect matching joining antipodal vertices.  The case $i=2$ is $Q_{4,2,e}$, already positive, so assume $i\ge3$.  Let $G_i=Q_{2i,i,e}$.  For $0\le t\le i$ one has
		\begin{equation}\label{eq:antipodal-cycle-distance}
			d_{G_i}(0,t)=\min\{t,i+1-t\}.
		\end{equation}
		Indeed, either one follows the outer cycle for $t$ steps, or one uses the antipodal edge $0i$ and then moves $i-t$ steps along the cycle; no shorter walk is possible after reducing multiple antipodal-edge uses.
		
		Put
		\[
		r_0=\left\lfloor\frac{i+1}{2}\right\rfloor,
		\qquad
		a=0,
		\quad b=r_0,
		\quad c=i+1,
		\quad x=1,
		\quad y=i.
		\]
		Using \eqref{eq:antipodal-cycle-distance},
		\[
		d(a,b)=r_0,
		\quad d(a,c)=2,
		\quad d(b,c)=r_0,
		\quad d(x,y)=2,
		\]
		so the left-hand side of the 5-gonal inequality is $L=2r_0+4$.  The six mixed distances are
		\[
		\begin{gathered}
			d(x,a)=1,\quad d(x,b)=r_0-1,\quad d(x,c)=1,\\
			d(y,a)=1,\quad d(y,b)=\left\lceil\frac{i-1}{2}\right\rceil,\quad d(y,c)=1.
		\end{gathered}
		\]
		Thus
		\[
		R=3+r_0+\left\lceil\frac{i-1}{2}\right\rceil.
		\]
		If $i=2q$, then $L=2q+4$ and $R=2q+3$; if $i=2q+1$, then $L=2q+6$ and $R=2q+4$.  Hence Lemma~\ref{lem:5gonal} is violated.
		
		Finally suppose $k=2i+1$ is odd and $m=i\ge3$.  Define a mapping :
		\begin{center}
			$\tau :V(Q_{2i+1,m,e}) \to V(Q_{2i+1,2,e})$ for $i \ge 3$,
		\end{center}
		such that for $t \in \{0,1,2, \dots,k-1\}$
		\begin{center}
			$\tau(t)= 2t-1$ module $k$.
		\end{center}
		Suppose that $u\in V(Q_{2i+1,m,e})$ whose  neighbors are $u+1$, $u-1$, $u+i$ and $u-i$. According to the mapping  $\tau$, we have
		\begin{center}
			$\tau(u)= 2u-1$, $\tau(u-1)= 2u-3$, $\tau(u+1)= 2u+1$, $\tau(u+i)= 2u+2i-1$, $\tau(u-i)= 2u-2i-1$.
		\end{center}
		Since $m =i$ and $k=2i+1$. Therefore,
		\begin{center}
			$\tau(u+i)= 2u-2$ and $\tau(u-i)= 2u$.
		\end{center}
		However, in the graph $Q_{2i+1,2,e}$, the vertex $2u-1$ has neighbors $2u$, $2u-2$, $2u+1$, and $2u-3$. That means that  the adjacency relations in $Q_{2i+1,m,e}$ are preserved in $Q_{2i+1,2,e}$  under the mapping $\tau$. Hence
		\[
		Q_{2i+1,i,e}\cong Q_{2i+1,2,e}.
		\]
		The latter is non-$\ell_1$ by the already proved $m=2$ case.  This completes the proof.
	\end{proof}
	
	Combining Theorems~\ref{thm:qkmr} and~\ref{thm:qkme} gives the toroidal part of the classification.
	
	\begin{corollary}[Toroidal maps]\label{cor:torus}
		Among the toroidal regular quadrilateral maps in the two families above, the $\ell_1$-embeddable graphs are precisely
		\[
		Q_{k,m,0},\qquad Q_{k,2,1},\qquad Q_{k,1,e},\qquad Q_{5,2,e},\qquad Q_{2i,2,e}\ (i\ge2).
		\]
	\end{corollary}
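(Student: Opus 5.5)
This corollary is obtained by taking the union of the two classifications already proved, so the plan is bookkeeping rather than new estimates. The toroidal regular quadrilateral maps in Thomassen's classification fall into the two families $Q_{k,m,r}$ (with $k,m\ge2$ and $0\le r\le\lfloor k/2\rfloor$) and $Q_{k,m,e}$ (with $k\ge3$ and $1\le m\le\lfloor k/2\rfloor$). The parameter ranges are without loss of generality. For $Q_{k,m,r}$ this is because of the isomorphism $Q_{k,m,r}\cong Q_{k,m,k-r}$ recorded at the start of the section. For $Q_{k,m,e}$ it is because $\operatorname{Circ}(k,\{\pm1,\pm m\})$ depends only on $\pm m \bmod k$.

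First, for $Q_{k,m,r}$ I invoke Theorem~\ref{thm:qkmr}. Within the stated range, the $\ell_1$-members are exactly those with $r=0$ or with $(m,r)=(2,1)$. These are the graphs $Q_{k,m,0}\cong C_k\square C_m$ and $Q_{k,2,1}$. One check is needed: the case $k=2$ with $r=1$ lies in the admissible range $r\le\lfloor k/2\rfloor$. Its underlying simple graph $K_4$ is covered by the statement $Q_{k,2,1}$, so no exception needs to be listed.

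Second, for $Q_{k,m,e}$ I invoke Theorem~\ref{thm:qkme}. This gives $Q_{k,1,e}$, $Q_{5,2,e}$, and $Q_{k,2,e}$ with $k$ even and $k\ge4$. Writing $k=2i$, the last family is exactly $Q_{2i,2,e}$ for $i\ge2$.

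Finally, the union of the two lists is the displayed list, and every other member of either family is non-$\ell_1$ by the converse halves of the same theorems. The only point needing care is that the two families overlap up to isomorphism, for instance $Q_{2i,2,e}\cong Q_{i,2,1}$ from Proposition~\ref{prop:qkme-positive}. Such overlaps do not affect the statement: it lists graphs according to their family parametrization, and $\ell_1$-embeddability is an isomorphism invariant, so the two theorems cannot conflict. I therefore expect no genuine obstacle. The only step worth double-checking is that the boundary parameter values in each family (for example $m=1$, $k=2$, and $r=\lfloor k/2\rfloor$ with $k$ even) are each assigned to exactly one case of the two theorems.
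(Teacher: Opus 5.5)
Your proposal is correct and takes the same approach as the paper, which obtains the corollary simply by combining Theorem~\ref{thm:qkmr} and Theorem~\ref{thm:qkme} and taking the union of their positive lists. Your extra bookkeeping is accurate but adds nothing beyond what those two theorems already state; this covers the parameter-range reductions, the $k=2$, $r=1$ case giving $K_4$, and the harmless overlap $Q_{2i,2,e}\cong Q_{i,2,1}$.
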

	
	\begin{corollary}\label{cor:circulant}
		Within the family
		\[
		\Circ(k,\{\pm1,\pm m\}),\qquad k\ge3,\qquad
		1\le m\le\lfloor k/2\rfloor,
		\]
		the $\ell_1$-embeddable members are exactly the cycle cases $m=1$, the exceptional graph $\Circ(5,\{\pm1,\pm2\})\cong K_5$, and the graphs $\Circ(2i,\{\pm1,\pm2\})$ for $i\ge2$.
	\end{corollary}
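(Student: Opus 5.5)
This corollary is a direct translation of Theorem~\ref{thm:qkme} through the identification \eqref{eq:circulant}, and the plan is simply to make that translation explicit. First, for every $k\ge3$ and $1\le m\le\lfloor k/2\rfloor$, we check that $Q_{k,m,e}$ and $\Circ(k,\{\pm1,\pm m\})$ are literally the same graph. Both have vertex set $\Z_k$, and in both $i$ is joined to $i\pm1$ and $i\pm m$. When $m=1$, or when $2m=k$ so that $+m=-m$, parallel edges are suppressed exactly as in the convention adopted for $Q_{k,m,e}$. Since $\ell_1$-embeddability is a property of the shortest-path metric, it is invariant under graph isomorphism. Hence $\Circ(k,\{\pm1,\pm m\})$ is $\ell_1$-embeddable if and only if $Q_{k,m,e}$ is.

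Next we apply Theorem~\ref{thm:qkme} verbatim over the same parameter range. It yields exactly three positive cases, which we rewrite in circulant form:
\begin{itemize}
\item $m=1$, where the underlying simple graph $\Circ(k,\{\pm1\})$ is the cycle $C_k$;
\item $(k,m)=(5,2)$, where $\Circ(5,\{\pm1,\pm2\})\cong K_5$, since $\{\pm1,\pm2\}$ exhausts $\Z_5\setminus\{0\}$, as already observed in Proposition~\ref{prop:qkme-positive};
\item $m=2$ with $k$ even and $k\ge4$; writing $k=2i$ with $i\ge2$ gives $\Circ(2i,\{\pm1,\pm2\})$.
\end{itemize}
Every other member of the family is non-$\ell_1$ by the converse part of Theorem~\ref{thm:qkme}.

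The only point needing care is that the parameter ranges match exactly, so that no circulant in the stated family escapes the theorem. In particular, the case $k=4$, $m=2$ is a boundary case where $\pm2$ coincide; it is covered on both sides, since it lies in the family $\Circ(2i,\{\pm1,\pm2\})$ with $i=2$. I would also remark that restricting to $m\le\lfloor k/2\rfloor$ loses no generality, since $\Circ(k,\{\pm1,\pm m\})=\Circ(k,\{\pm1,\pm(k-m)\})$. No genuine obstacle is expected: all the difficulty is already absorbed into Theorem~\ref{thm:qkme}, and this corollary is bookkeeping.
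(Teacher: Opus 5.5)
Your proposal is correct and is the paper's argument: the paper's proof is the single line invoking Theorem~\ref{thm:qkme} through the identification \eqref{eq:circulant}. Your additional bookkeeping only spells out that one-line translation. That bookkeeping covers the matching parameter ranges, the suppressed parallel edges when $m=1$ or $2m=k$, the boundary case $k=4$, and the rewriting of $k$ even as $k=2i$.
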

	
	\begin{proof}
		This is exactly Theorem~\ref{thm:qkme} under the isomorphism \eqref{eq:circulant}.
	\end{proof}
	
	\section{Klein-bottle quadrilateral maps}\label{sec:klein}
	
	Thomassen's classification contains six families of 4-regular quadrilateral maps $Q_{k,m,a}$, $Q_{k,m,b}$, $Q_{k,m,c}$, $Q_{k,m,f}$, $Q_{k,m,g}$, and $Q_{k,m,h}$ on the Klein bottle \cite{Thomassen1991}.  We use the following known isomorphisms to reduce the classification to three essential families.
	
	\begin{lemma}[Klein-bottle reductions \cite{XuQian}]\label{lem:klein-reductions}
		The Klein-bottle quadrilateral-map families satisfy
		\[
		Q_{k,m,c}\cong Q_{2m,k/2,a},\qquad
		Q_{k,m,f}\cong Q_{2(m+1),k/2,b},\qquad
		Q_{k,m,g}\cong Q_{2m+1,k/2,a}.
		\]
		Moreover, the odd-$k$ version of the $b$-type boundary gluing is isomorphic to a member of the $a$-family.  Hence it suffices to determine $\ell_1$-embeddability for $Q_{k,m,a}$, $Q_{k,m,b}$ with even $k$, and $Q_{k,m,h}$.
	\end{lemma}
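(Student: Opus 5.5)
The plan is to prove each isomorphism by realizing every Klein-bottle family as a quotient of the square lattice, rather than checking edge lists directly. In Thomassen's construction each map $Q_{k,m,\ast}$ ($\ast\in\{a,b,c,f,g,h\}$) comes from a $k\times m$ (or similarly shaped) piece of the square grid. One pair of opposite sides is glued by a translation. The other pair is glued by an orientation-reversing identification, possibly with a shift or a diagonal offset. Accordingly, I will first rewrite each family as $\Z^2/\Gamma_\ast$, where $\Gamma_\ast$ is a group of automorphisms of the square lattice $\Z^2$ acting freely on vertices. Such a group is generated by a translation $\tau_\ast$ and a glide reflection $\gamma_\ast$ with axis horizontal, vertical, or diagonal, exactly as the toroidal family $Q_{k,m,r}$ was written above as the quotient by the translations $(0,k)$ and $(m,r)$. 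The key observation is that two such quotients are isomorphic as graphs whenever $\sigma\Gamma\sigma^{-1}=\Gamma'$ for some automorphism $\sigma$ of the square lattice, such as a rotation by $90^\circ$, a reflection, or a translation. Then $\sigma$ descends to a bijection of vertex orbits preserving adjacency.

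For each claimed isomorphism I will therefore compute generators of both groups and look for $\sigma$.
\begin{itemize}
\item For $Q_{k,m,c}\cong Q_{2m,k/2,a}$, the $c$-gluing has its glide reflection along the side of length $m$. Its square $\gamma_c^2$ is a translation of length $2m$. I expect that rotating by $90^\circ$ and replacing the generating pair $\{\tau_c,\gamma_c\}$ by $\{\gamma_c^2,\ \tau_c^{\pm1}\gamma_c\}$ exhibits $\Gamma_c$ as an $a$-type group. That $a$-type group has translation period $2m$ and glide length $k/2$; this step is where $k$ even is needed.
\item For $Q_{k,m,f}\cong Q_{2(m+1),k/2,b}$, I will use the same change of generators, with the extra shift in the $f$-gluing absorbed into a translation $\sigma$. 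This should account for the period $2(m+1)$ in place of $2m$.
\item For $Q_{k,m,g}\cong Q_{2m+1,k/2,a}$, I will treat the diagonal offset in the $g$-gluing the same way, giving period $2m+1$.
\end{itemize}
In each case I will also verify that the new generators generate all of $\Gamma_\ast$ and not a proper subgroup. I plan to do this by comparing orbit counts: both quotients must have $km$ vertices.

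For the odd-$k$ version of the $b$-gluing, I will use the same method. When $k$ is odd, the shift in the $b$-type glide combines with the translation $\tau_b$, so that a suitable product $\tau_b^{j}\gamma_b$ becomes a pure glide reflection with zero offset. This exhibits the group as an $a$-type group. Here the parity of $k$ decides whether such a $j$ exists. The concluding sentence of the lemma, that it suffices to treat $Q_{k,m,a}$, $Q_{k,m,b}$ with $k$ even, and $Q_{k,m,h}$, then follows immediately. I will explicitly leave $h$ untouched, since its group contains a glide whose square is not conjugate into the others'.

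I expect the main obstacle to be bookkeeping rather than ideas. Thomassen's boundary conventions (which side is twisted, and where the shift starts) must be translated correctly into $\Gamma_\ast$, and the parameter ranges and small degenerate cases must match, including cases where loops or multiple edges appear and only underlying simple graphs are compared. To control this, I will first check each isomorphism on a smallest nontrivial instance. For example, I will verify $Q_{4,2,c}\cong Q_{4,2,a}$ by writing down explicit vertex labels, and only then state the general formula for $\sigma$ on vertices $a_{p,j}$.
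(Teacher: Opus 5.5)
\textbf{There is a genuine gap: the concrete steps you propose for the $c$, $f$, $g$ isomorphisms cannot work.} The paper gives no proof of this lemma; it quotes it from the literature. Your framework is legitimate: write each family as $\Z^2/\Gamma$ for a Klein-bottle group $\Gamma$ of grid automorphisms, and get isomorphisms by conjugating inside the automorphism group of the square grid. Your odd-$k$ $b$ argument is also essentially right. Replacing $\gamma_b$ by $\tau_b^{j}\gamma_b$ with $j$ odd moves the glide axis by $jk/2$, which is a half-integer exactly when $k$ is odd. So an axis through a vertex column becomes one through edge midpoints, as in the $a$-gluing.

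The mechanism for $c$, $f$, $g$ fails for the following reasons.
\begin{enumerate}
\item In a Klein-bottle group all glide axes are parallel, and $\gamma^2$ translates along them. Since $\gamma\tau\gamma^{-1}=\tau^{-1}$, one has $(\tau\gamma)^2=\tau(\gamma\tau\gamma^{-1})\gamma^2=\gamma^2$. So $\{\gamma_c^2,\tau_c^{\pm1}\gamma_c\}$ generates an infinite cyclic group (in general, a group preserving the axis of $\tau_c^{\pm1}\gamma_c$), never $\Gamma_c$.
\item Conjugation by a grid automorphism preserves translation lengths, and preserves whether a translation is parallel or perpendicular to the glide axes. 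In the group of $Q_{2m,k/2,a}$ the translation of length $2m$ is perpendicular to the axes, and the translations along the axes are multiples of $k$. So a $c$-glide along a side of length $m$ whose square has length $2m$ cannot, for general $k,m$, be conjugated into that $a$-group.
\item For the same reason, no translation $\sigma$ can turn a period $2m$ into $2(m+1)$.
\item A diagonal glide axis in the $g$-gluing could never be conjugated onto the grid-parallel axes of an $a$-group.
\item Your orbit-count check contradicts the statement. $Q_{2(m+1),k/2,b}$ has $k(m+1)$ vertices and $Q_{2m+1,k/2,a}$ has $k(2m+1)/2$, not $km$.
\end{enumerate}

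\textbf{The missing idea is the right picture of $c$, $f$, $g$, which the stated formulas already dictate.} These families use the strip between two adjacent parallel glide axes as a fundamental domain. Concretely, take a cylinder $C_k\square P_m$ and close up each boundary $k$-cycle by a glide of length $k/2$ along it; this is why $k$ must be even.
\begin{itemize}
\item An axis through edge midpoints adds antipodal chords to the boundary cycle.
\item An axis through a column of vertices adds a column that collapses to a $C_{k/2}$. This accounts for the extra $k/2$ vertices at each such end.
\end{itemize}
The translation you need is not a glide square but the product $\gamma_2\gamma_1^{-1}$ of the two boundary glides. It is perpendicular to the axes, with length twice their distance:
\begin{itemize}
\item $2m$ for midpoint/midpoint ends, giving type $a$;
\item $2(m+1)$ for vertex/vertex ends, giving type $b$;
\item $2m+1$ for mixed ends, an odd period, hence type $a$ by the odd-$k$ reduction.
\end{itemize}
With this, $\langle\gamma_1,\gamma_2\gamma_1^{-1}\rangle$ is, after a translation (and possibly a quarter-turn) of coordinates, exactly the $a$- or $b$-group with glide length $k/2$. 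No rearrangement of generators is needed. You would still have to confirm this against Thomassen's actual boundary conventions, which the paper does not reproduce.
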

	
	By Lemma~\ref{lem:klein-reductions}, we treat the three essential families $Q_{k,m,a}$, $Q_{k,m,b}$, and $Q_{k,m,h}$ in turn.
	
	Let $C_k\square P_m$ be a quadrilateral cylinder with levels $0,1,\ldots,m-1$ and cyclic coordinate $j\in\Z_k$.  Write $x_j$ for the boundary vertex on level $0$ and $y_j$ for the boundary vertex on level $m-1$; interior vertices are denoted by $u_{p,j}$, $1\le p\le m-2$.
	
	The family $Q_{k,m,a}$, $k,m\ge2$, is obtained from this cylinder by adding the boundary edges $x_1y_2, x_2y_1, x_ky_3, x_3y_k,\ldots$  The family $Q_{k,m,b}$ is needed only for even $k$; it is obtained by adding  
	$x_1y_1, x_2y_k, x_ky_2, \dots, x_{\frac{k}{2}+1}y_{\frac{k}{2}+1}$.  The odd-$k$ case is covered by Lemma~\ref{lem:klein-reductions}.
	
	Figure~\ref{fig:klein-ab} displays representative Klein-bottle gluings for the $a$- and $b$-families.  These drawings are not used as topological embeddings in the proofs; they are combinatorial schematics that make the boundary identifications visible.
	
	\begin{figure}[H]
		\centering
		\begin{minipage}{0.32\textwidth}
			\centering
			\includegraphics[width=\linewidth]{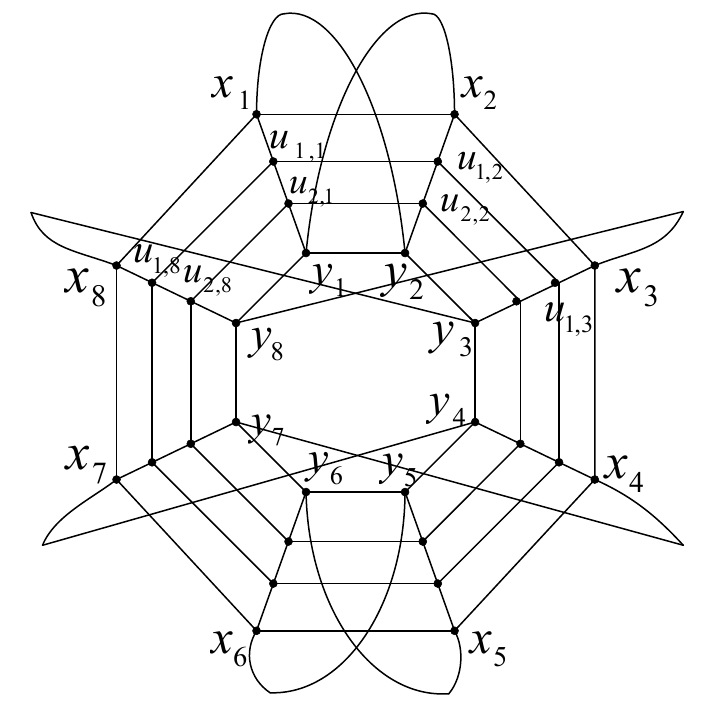}\\[-1mm]
			{\small (a) $Q_{8,4,a}$.}
		\end{minipage}\hfill
		\begin{minipage}{0.32\textwidth}
			\centering
			\includegraphics[width=\linewidth]{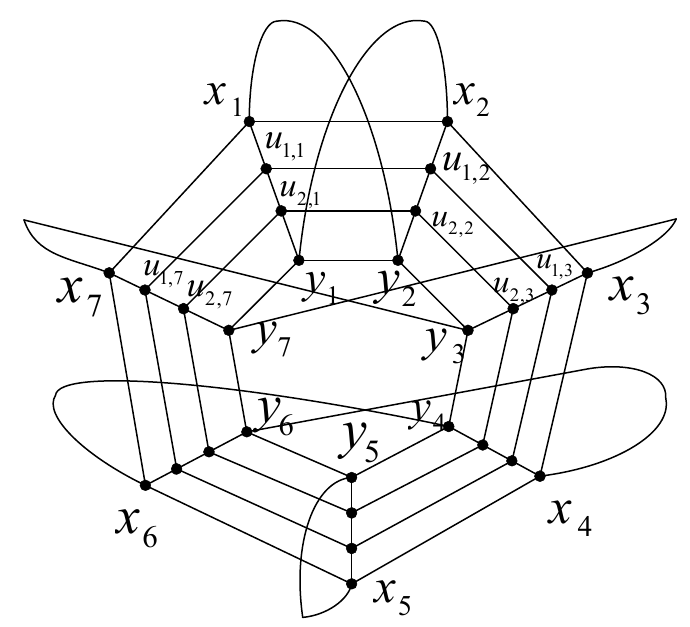}\\[-1mm]
			{\small (b) $Q_{7,4,a}$.}
		\end{minipage}\hfill
		\begin{minipage}{0.32\textwidth}
			\centering
			\includegraphics[width=\linewidth]{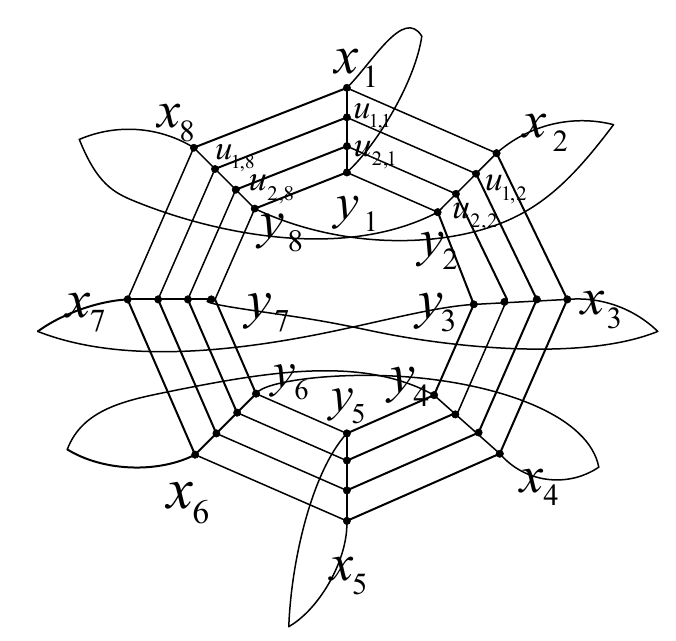}\\[-1mm]
			{\small (c) $Q_{8,4,b}$.}
		\end{minipage}
		\caption{Representative Klein-bottle quadrilateral maps obtained from a quadrilateral cylinder.  The added boundary edges reverse the orientation of one boundary cycle; the $a$- and $b$-families differ in the shift in this orientation-reversing gluing.}
		\label{fig:klein-ab}
	\end{figure}
	
	\subsection[The family Qkma]{The family $Q_{k,m,a}$}
	
	The exceptional graph $Q_{4,2,a}$ and its scale-$2$ hypercube labels are shown in Figure~\ref{fig:q42a-embedding}.  This is the only non-complete positive member of the $a$-family.
	
	\begin{figure}[H]
		\centering
		\includegraphics[width=0.46\textwidth]{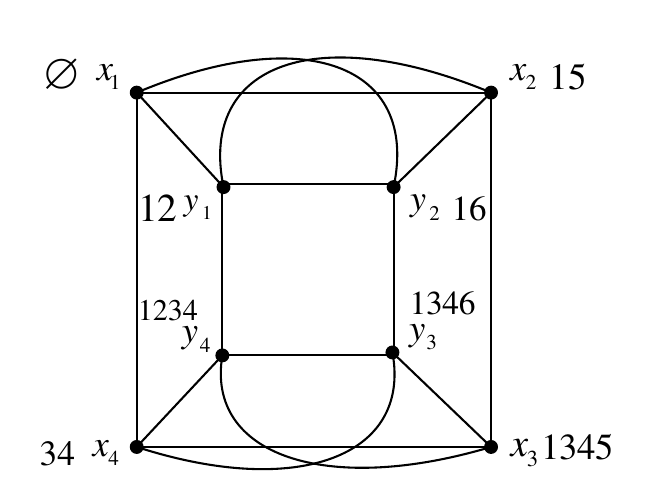}
		\caption{The exceptional map $Q_{4,2,a}$.  The labels at the vertices are subsets of $[6]$ and give a scale-$2$ embedding into $Q_6$; for example, $1345$ denotes $\{1,3,4,5\}$.}
		\label{fig:q42a-embedding}
	\end{figure}
	
	\begin{theorem}\label{thm:qkma}
		For $k,m\ge2$, the graph $Q_{k,m,a}$ is $\ell_1$-embeddable if and only if $(k,m)=(2,2)$ or $(k,m)=(4,2)$.
	\end{theorem}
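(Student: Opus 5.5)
The plan follows the pattern of Theorem~\ref{thm:qkmr}: positive cases by direct verification, negative cases by Shpectorov labels propagated across facial 4-cycles.

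\emph{Positive cases.} For $(k,m)=(2,2)$ the cylinder $C_2\square P_2$ with the added boundary edges has underlying simple graph $K_4$. This is an $\ell_1$-graph by Lemma~\ref{lem:product}. For $(k,m)=(4,2)$ I would take the explicit scale-$2$ labelling by subsets of $[6]$ shown in Figure~\ref{fig:q42a-embedding}. The graph has only eight vertices, so one checks that $|\varphi(u)\symdiff\varphi(v)|=2d(u,v)$ for every pair. This is a finite table comparison.

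\emph{Negative cases: generic situation.} Assume a scale-$\lambda$ embedding exists for some other $(k,m)$. The orientation-reversing gluing sends $x_j$ to $y_{3-j}$ (indices mod $k$). The plan is to propagate labels and reach a contradiction with Lemma~\ref{lem:incident} or Lemma~\ref{lem:path-labels}.
\begin{enumerate}
\item Apply Lemma~\ref{lem:face-labels} along the columns of the cylinder. The vertical faces give $\ell(x_jx_{j+1})=\ell(y_jy_{j+1})$ for every $j$.
\item Use the boundary quadrilaterals $x_jx_{j+1}y_{2-j}y_{3-j}$ that close up through the added edges. When these 4-cycles are isometric, they give $\ell(x_jx_{j+1})=\ell(y_{2-j}y_{3-j})=\ell(x_{2-j}x_{3-j})$.
\item Combine the two relations. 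The edge label pattern on the cycle $x_1\cdots x_k$ is then invariant under the reflection $j\mapsto 2-j$.
\item Take $j=1$, so the edge $x_1x_2$ is fixed by the reflection. Also take the edges $x_0x_1$ and $x_1x_2$ at the fixed vertex region.
\end{enumerate}
If $k\ge3$ and $m\ge3$, all these 4-cycles should be isometric. The $x$-cycle $C_k$ is then isometric, and the reflection forces two non-opposite edges, for example $x_0x_1$ and $x_2x_3$, to carry equal labels. This contradicts Lemma~\ref{lem:isometric-cycle}(3) or Lemma~\ref{lem:incident}. The argument needs a vertical shortest path of length $m-1$ to certify isometry, which is why $m\ge3$ is required.

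\emph{Small cases.} For $m=2$ (with $k\ne2,4$) and for $k=2$ (with $m\ge3$), the boundary 4-cycles may fail to be isometric or may degenerate. I would treat these separately with the 5-gonal inequality (Lemma~\ref{lem:5gonal}), choosing five vertices as in the $k=2$ part of Theorem~\ref{thm:qkmr}. For $k=2$ I would pick $x=x_1$, $y=y_2$, $a=x_2$, $b=y_1$, and $c$ a vertex near the middle level $\lfloor (m+1)/2\rfloor$. For $m=2$ with $k=3$ or $k\ge5$, I would place $a,b,c$ roughly equally spaced on the $x$-cycle and take $x,y$ to be two $y$-vertices adjacent to them. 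For odd $k$ in this range I could alternatively use an isometric odd cycle together with Lemma~\ref{lem:isometric-cycle}(2), as in Proposition~\ref{prop:q92e}.

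\emph{Main obstacle.} The main difficulty is the distance bookkeeping: verifying exactly which boundary 4-cycles and which long cycles are isometric in the Klein-bottle quotient, whose distance formula is not a simple lattice quotient like \eqref{eq:quotient-distance}. The small-$m$ cases need the most care, because shortcuts through the twisted edges change distances. If a uniform 5-point choice fails for some residues of $k$, I would split by parity of $k$ and search for five vertices around an antipodal pair, as in the proof of Theorem~\ref{thm:qkme}.
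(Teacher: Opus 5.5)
Your positive cases agree with the paper, but the negative direction has a genuine gap: nothing in your plan produces a contradiction for $Q_{4,m,a}$ with $m\ge3$.

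The face relations you use (column faces and the twisted faces $x_jx_{j+1}y_{2-j}y_{3-j}$) identify $x$-cycle labels only through $\ell(x_jx_{j+1})=\ell(x_{2-j}x_{3-j})$. For $k=4$ the only nontrivial instance is $\ell(x_0x_1)=\ell(x_2x_3)$. These are \emph{opposite} edges of the isometric $4$-cycle $x_1x_2x_3x_4$, and Lemma~\ref{lem:face-labels} requires this equality anyway, so no contradiction follows. This case is not among your ``small cases'', and for $k=2$ the reflection is trivial as well.

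The missing idea is a relation that ties a column to a twisted edge instead of staying inside faces. The paper uses the isometric $(m+1)$-cycle $x_1u_{1,1}\cdots u_{m-2,1}y_1y_2x_1$:
\begin{itemize}
\item column propagation gives $\ell(y_1y_2)=\ell(x_1x_2)=\eta$;
\item the faces make every label on the vertical path $x_1\cdots y_1$ disjoint from $\eta$;
\item yet Lemma~\ref{lem:isometric-cycle} forces $\eta$ to meet the label of an edge of that path opposite $y_1y_2$.
\end{itemize}
Equivalently, compare the two $x_1$--$y_1$ paths $x_1y_2y_1$ and the column using Lemma~\ref{lem:path-labels}. Since $\ell(x_1y_2)\cap\eta=\varnothing$, this puts $\eta$ inside the union of the vertical labels. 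This single argument settles every $k\ge2$ with $m\ge3$, including $k=2$ and $k=4$.

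Two further steps fail as written.

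First, the $x$-cycle is not isometric for $k,m\ge3$ in general. Going up column $j$ and across the twisted edge $y_jx_{3-j}$ gives $d(x_j,x_{3-j})\le m$. For example, with $k=20$ and $m=3$ one has $d(x_6,x_{17})\le3$, while the cyclic distance is $9$, so Lemma~\ref{lem:isometric-cycle}(3) is unavailable. The correct way to use $\ell(x_0x_1)=\ell(x_2x_3)$ is:
\begin{itemize}
\item for $k=3$, the two edges are incident, contradicting Lemma~\ref{lem:incident};
\item for $k\ge5$, the cancellation $\varphi(x_0)\symdiff\varphi(x_3)=\ell(x_1x_2)$ forces $d(x_0,x_3)=1$, although these vertices are non-adjacent.
\end{itemize}
This is precisely the paper's $m=2$ argument. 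The $4$-cycles $x_0x_1y_1y_0$ and $x_2x_3y_0y_1$ it needs are isometric for $m=2$ too, so your separate and unspecified $5$-gonal treatment of $m=2$ is unnecessary.

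Second, your $5$-gonal choice for $k=2$ cannot succeed. With $x=x_1$, $y=y_2$, $a=x_2$, $b=y_1$ one has $d(x,y)=d(a,b)=d(x,a)=d(y,b)=1$ and $d(x,b)=d(y,a)=2$. The left side is then $2+d(a,c)+d(b,c)\le 4+d(x,c)+d(y,c)$. This is strictly below the right side $6+d(x,c)+d(y,c)$ for every $c$.
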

	
	\begin{proof}
		The graph $Q_{2,2,a}$ is $K_4$ after loops and parallel edges are suppressed, and hence is an $\ell_1$-graph by Lemma~\ref{lem:product}.  For $Q_{4,2,a}$, Figure~\ref{fig:q42a-embedding} and the following table give a scale-2 embedding into $Q_6$:
		\[
		\begin{array}{c|cccccccc}
			v & x_1&x_2&x_3&x_4&y_1&y_2&y_3&y_4\\
			\hline
			\varphi(v) & \varnothing&15&1345&34&12&16&1346&1234
		\end{array}
		\]
		Here, for instance, $1345$ denotes the subset $\{1,3,4,5\}\subseteq[6]$.  A direct check gives
		\[
		|\varphi(u)\symdiff\varphi(v)|=2d_{Q_{4,2,a}}(u,v)
		\]
		for all pairs of vertices $u,v$.  Thus $Q_{4,2,a}$ is $\ell_1$-embeddable.
		
		Conversely, assume that $Q_{k,m,a}$ is $\ell_1$-embeddable with scale $\lambda$, and suppose that it is not one of the two positive cases.
		
		First let $m=2$.  The cycles $x_1x_ky_ky_1x_1$ and $x_2x_3y_ky_1x_2$ are isometric 4-cycles in the underlying simple graph.  Lemma~\ref{lem:face-labels} gives
		\[
		\ell(x_1x_k)=\ell(y_1y_k)=\ell(x_2x_3).
		\]
		If $k=3$, these are two incident top-cycle edges, contradicting Lemma~\ref{lem:incident}.  If $k\ge5$, then along the path $x_kx_1x_2x_3$ the first and last labels cancel.  Hence
		\[
		|\varphi(x_k)\symdiff\varphi(x_3)|
		=|\ell(x_kx_1)\symdiff\ell(x_1x_2)\symdiff\ell(x_2x_3)|
		=|\ell(x_1x_2)|=\lambda.
		\]
		Therefore $d(x_k,x_3)=1$, although $x_k$ and $x_3$ are not adjacent when $k\ge5$.  This is impossible.
		
		Now let $m\ge3$.  The facial 4-cycles of the cylinder are isometric, so horizontal labels propagate through the cylinder:
		\[
		\ell(x_1x_2)=\ell(u_{1,1}u_{1,2})=\cdots=\ell(y_1y_2)=:\eta.
		\]
		For every edge $e$ on the vertical path
		\[
		P=x_1u_{1,1}u_{2,1}\cdots u_{m-2,1}y_1,
		\]
		the label of $e$ is disjoint from $\eta$, by Lemma~\ref{lem:face-labels} applied to the corresponding isometric face.  On the other hand, the cycle
		\[
		x_1u_{1,1}u_{2,1}\cdots u_{m-2,1}y_1y_2x_1
		\]
		is isometric.  In this cycle, the edge $y_1y_2$ has an opposite edge lying on the path $P$.  Lemma~\ref{lem:isometric-cycle} gives a non-empty intersection, of size $\lambda$ in the even case and $\lambda/2$ in the odd case, between $\ell(y_1y_2)=\eta$ and the label of that opposite edge.  This contradicts the disjointness just obtained.  Hence $Q_{k,m,a}$ is not $\ell_1$-embeddable for $m\ge3$.
	\end{proof}
	
	\subsection[The family Qkmb]{The family $Q_{k,m,b}$}
	
	\begin{theorem}\label{thm:qkmb}
		Let $k\ge2$ be even and let $m\ge2$.  The graph $Q_{k,m,b}$ is $\ell_1$-embeddable if and only if $k=2$.
	\end{theorem}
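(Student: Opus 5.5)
The proof splits into a positive part for $k=2$ and a negative part for even $k\ge4$.

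\emph{Positive case $k=2$.} The cylinder $C_2\square P_m$ reduces to $P_2\square P_m$ (a ladder) once parallel edges are suppressed. The $b$-gluing adds $x_1y_1$ and $x_2y_2$, so each level path $x_j u_{1,j}\cdots y_j$ closes to a cycle $C_m$. Hence the underlying simple graph is $P_2\square C_m$. Since $P_2=K_2$ and $C_m$ are $\ell_1$-graphs, Lemma~\ref{lem:product} shows that $Q_{2,m,b}$ is $\ell_1$-embeddable. The small case $m=2$ needs a separate check: there the graph degenerates to $C_4$ or $K_4$, and either is still $\ell_1$ by Lemma~\ref{lem:product}.

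\emph{Negative direction, $k\ge4$ even.} Assume a scale-$\lambda$ embedding exists and mimic the $m\ge3$ argument of Theorem~\ref{thm:qkma}. The facial 4-cycles of the cylinder are isometric, so by Lemma~\ref{lem:face-labels} horizontal labels propagate between levels:
\[
\ell(x_jx_{j+1})=\ell(u_{p,j}u_{p,j+1})=\ell(y_jy_{j+1}).
\]
The $b$-gluing $x_j\mapsto y_{2-j}$ (indices mod $k$) reverses orientation. The boundary 4-cycles $x_jx_{j+1}y_{1-j}y_{2-j}$ therefore force
\[
\ell(x_jx_{j+1})=\ell(y_{1-j}y_{2-j})=\ell(x_{1-j}x_{2-j}).
\]
These boundary 4-cycles are isometric when $m\ge2$ and $k\ge4$, which I would verify with a quotient-distance formula analogous to \eqref{eq:quotient-distance}. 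Taking $j=0$ gives $\ell(x_0x_1)=\ell(x_1x_2)$, two incident edges with equal labels, which contradicts Lemma~\ref{lem:incident}. If instead the reflection fixes an edge rather than a vertex, I would take $j=k/2$ and use the fixed vertex $x_{k/2+1}$.

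\emph{Fallback.} If the reflection pairs a horizontal edge with itself, there is no incident pair of equal labels. In that case I would argue as in Theorem~\ref{thm:qkma} with an isometric cycle that runs up a vertical path $x_1u_{1,1}\cdots y_1$, closes through a boundary edge, and continues back along a short horizontal arc. By Lemma~\ref{lem:face-labels}, vertical labels are disjoint from the propagated horizontal labels. Lemma~\ref{lem:isometric-cycle}, however, forces a horizontal edge opposite a vertical edge to share labels with it, giving the contradiction. For $m=2$ I expect a 5-gonal violation via Lemma~\ref{lem:5gonal}, or a cancellation along the path $x_kx_1x_2x_3$ showing $d(x_k,x_3)=1$, exactly as in the $m=2$ case of Theorem~\ref{thm:qkma}.

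\emph{Main obstacle.} The hard step is checking isometry of the twisted boundary 4-cycles and of the long cycle, uniformly in $k$ and $m$. Small cases such as $k=4$ with $m=2$ or $3$ may have shortcuts through the gluing that break isometry, and I would handle those by direct 5-gonal computations.
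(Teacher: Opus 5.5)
Your main argument is correct and is essentially the paper's. The paper combines facial propagation with the boundary 4-cycle $x_1x_2y_ky_1$ (your $j=1$) to get $\ell(y_1y_2)=\ell(y_1y_k)$ at $y_1$, which mirrors your $j=0$ conclusion $\ell(x_kx_1)=\ell(x_1x_2)$ at $x_1$; the $k=2$ case is handled identically via $P_2\square C_m$. Your fallback and ``main obstacle'' are unnecessary: the index reflection $j\mapsto 2-j \pmod k$ of the $b$-gluing fixes $1$ and $k/2+1$ and never fixes an edge, and a 4-cycle is isometric exactly when it is chordless, which is immediate here for every even $k\ge4$ and $m\ge2$.
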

	
	\begin{proof}
		If $k=2$, then after parallel edges are suppressed one has
		\[
		Q_{2,m,b}\cong P_2\square C_m.
		\]
		This is an $\ell_1$-graph by Lemma~\ref{lem:product}.
		
		Assume now that $k\ge4$ and that $Q_{k,m,b}$ has a scale-$\lambda$ embedding.  The facial 4-cycles in the cylinder are isometric and give
		\[
		\ell(x_1x_2)=\ell(u_{1,1}u_{1,2})=\cdots=\ell(y_1y_2),
		\]
		with the evident omission of the interior terms when $m=2$.  The boundary 4-cycle $x_1x_2y_ky_1x_1$ is also isometric, so
		\[
		\ell(x_1x_2)=\ell(y_1y_k).
		\]
		Consequently $\ell(y_1y_2)=\ell(y_1y_k)$.  These are distinct incident edges because $k\ge4$, contradicting Lemma~\ref{lem:incident}.
	\end{proof}
	
	\subsection[The family Qkmh]{The family $Q_{k,m,h}$}
	
	We recall the construction of $Q_{k,m,h}$, with $k\ge1$ and $m\ge2$.  Start with a square grid $P_m\square P_m$.  Denote its four boundary paths by
	\[
	x_1x_2\cdots x_m,
	\quad y_1y_2\cdots y_m,
	\quad u_1u_2\cdots u_m,
	\quad v_1v_2\cdots v_m,
	\]
	where $x_m=y_1$, $x_1=u_m$, $v_1=u_1$, and $y_m=v_m$.  Add $m$ internally disjoint paths
	\[
	x_jz_{1,j}z_{2,j}\cdots z_{k,j}y_j\qquad (1\le j\le m),
	\]
	and also the horizontal paths $z_{i,1}z_{i,2}\cdots z_{i,m}$ for $1\le i\le k$.  Finally add the boundary edges
	\[
	u_1z_{1,m},\quad u_2z_{2,m},\quad\ldots,\quad z_{k-1,1}v_2,
	\quad z_{k,1}v_1,
	\]
	with the natural interpretation when $k=1$.  This gives the quadrilateral map $Q_{k,m,h}$ on the Klein bottle.  Let
	\[
	w_1,w_2,\ldots,w_m
	\]
	denote the path from $u_{m-1}$ to $y_2$, so $w_1=u_{m-1}$ and $w_m=y_2$.
	
	Figure~\ref{fig:qkmh} gives a schematic of the $h$-family and the exceptional graph $Q_{1,2,h}$.  The highlighted path in the left panel is the path $w_1,w_2,\ldots,w_m$ used in the label-propagation proof for $m\ge3$.
	
	\begin{figure}[H]
		\centering
		\begin{minipage}{0.62\textwidth}
			\centering
			\includegraphics[width=\linewidth]{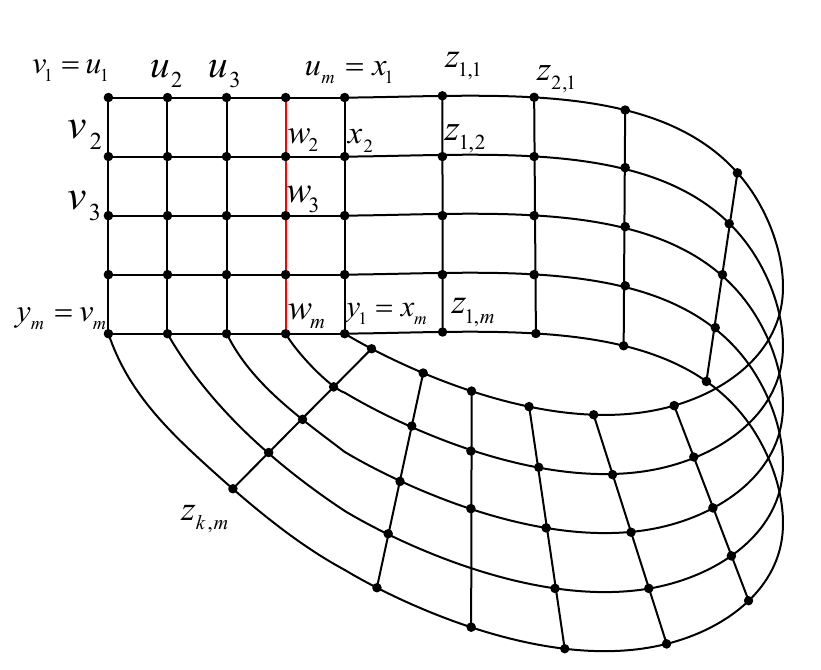}\\[-1mm]
			{\small (a) Schematic for $Q_{k,m,h}$.}
		\end{minipage}\hfill
		\begin{minipage}{0.32\textwidth}
			\centering
			\includegraphics[width=\linewidth]{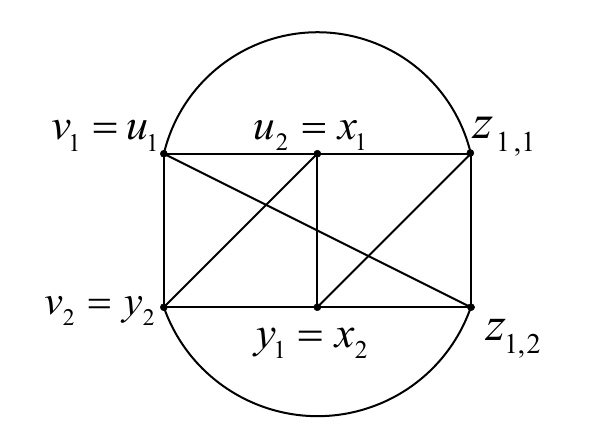}\\[-1mm]
			{\small (b) The exceptional graph $Q_{1,2,h}$.}
		\end{minipage}
		\caption{The Klein-bottle family $Q_{k,m,h}$.  Panel (a) shows the square grid together with the added $z$-paths and the distinguished path $w_1,\ldots,w_m$.  Panel (b) shows the small positive case, which is isomorphic to $Q_{3,2,1}$.}
		\label{fig:qkmh}
	\end{figure}
	
	\begin{theorem}\label{thm:qkmh}
		For $k\ge1$ and $m\ge2$, the graph $Q_{k,m,h}$ is $\ell_1$-embeddable if and only if $(k,m)=(1,2)$.
	\end{theorem}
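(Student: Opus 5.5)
We split the argument into the positive case $(k,m)=(1,2)$ and the negative cases. The negative cases are further divided into $m\ge3$, handled by label propagation along the path $w_1,\dots,w_m$, and $m=2$ with $k\ge2$, handled by a small-case argument.

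\emph{Positive direction.} As Figure~\ref{fig:qkmh}(b) suggests, we will exhibit an explicit isomorphism $Q_{1,2,h}\cong Q_{3,2,1}$. For $m=2,k=1$ the grid $P_2\square P_2$ has four vertices. Together with the two $z$-vertices and the boundary edges, this gives a 6-vertex 4-regular graph. This graph should be the octahedron $K_{2,2,2}=APrism_3$. We will list the vertices $x_1,x_2=y_1,y_2,u_1,z_{1,1},z_{1,2}$, check that each vertex has exactly one non-neighbour, and match the graph with $Q_{3,2,1}$. Proposition~\ref{prop:qkmr-positive}, or Lemma~\ref{lem:aprisms} with $b=3$, then gives $\ell_1$-embeddability.

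\emph{Case $m\ge3$.} We assume a scale-$\lambda$ embedding and mimic the proof of Theorem~\ref{thm:qkma}. First, the facial 4-cycles of the grid $P_m\square P_m$ and of the strips formed by the $z$-paths are isometric, because locally the map is a square lattice and $m\ge3$ prevents short noncontractible cycles from shortcutting a face. Lemma~\ref{lem:face-labels} then propagates the label of an edge parallel to $w_1w_2$ across the grid and through the $z$-strip. Second, the orientation-reversing boundary edges $u_iz_{i,m}$ and $z_{k-i+1,1}v_i$ feed this label back into the grid with reversed order. This forces the label of an edge at one end of the $w$-path to equal the label of an edge incident with it, or with its neighbour on the path, in the perpendicular direction. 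Lemma~\ref{lem:incident} then gives the contradiction. If the propagation closes on non-incident edges instead, we fall back on the argument used for $Q_{k,m,a}$. There we take an isometric cycle consisting of the $w$-path and a short return through the twisted edges, and use Lemma~\ref{lem:isometric-cycle}: the label opposite to a propagated edge must meet it in a set of size $\lambda$ or $\lambda/2$, while Lemma~\ref{lem:path-labels} applied along shortest two-edge paths forces these labels to be disjoint.

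\emph{Case $m=2$, $k\ge2$, and the main obstacle.} Here the grid is a single square, so the $h$-map degenerates to a ladder-like structure of length about $k+2$ with a twist. We expect this graph to be close to a shifted quadrilateral torus or to a $Q_{k',2,a}$-type graph. We will first try to identify it with a member already excluded by Theorem~\ref{thm:qkma} or Theorem~\ref{thm:qkmr}. If no identification works, we will use the 5-gonal inequality of Lemma~\ref{lem:5gonal}, choosing $x,y$ as adjacent vertices on the twist and $a,b,c$ spread around the long cycle as in Proposition~\ref{prop:odd-cycle-square}; for $k=2$ we will instead use a direct label-cancellation argument along a three-edge path, as in the $m=2$ part of Theorem~\ref{thm:qkma}. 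The main obstacle is verifying which 4-cycles are actually isometric near the twisted boundary, especially when $k$ is small. This requires an explicit distance computation in the quotient, and we will carry it out carefully for the specific cycles used.
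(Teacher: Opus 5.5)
Your positive direction is the same as the paper's: an explicit isomorphism $Q_{1,2,h}\cong Q_{3,2,1}$ (the octahedron), followed by Proposition~\ref{prop:qkmr-positive}. Your case $m\ge3$ is also the paper's label-propagation idea, but you should make it explicit instead of hedging. The label does not need the twisted edges $u_iz_{i,m}$ to return. The $z$-strip joins $x_j$ to $y_j$, and $y_1=x_m$, $y_2=w_m$, so the facial 4-cycles already give
\[
\ell(w_1w_2)=\ell(x_1x_2)=\ell(z_{1,1}z_{1,2})=\cdots=\ell(z_{k,1}z_{k,2})=\ell(y_1y_2)=\ell(w_mx_m)=\cdots=\ell(w_2x_2).
\]
Since $w_1w_2$ and $w_2x_2$ are incident, Lemma~\ref{lem:incident} finishes. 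Your isometric-cycle fallback is never needed.

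The genuine gap is the case $m=2$, $k\ge2$. You list three candidate strategies but carry out none, so this infinite family of negative cases is not proved. The missing idea is the explicit isomorphism $Q_{k,2,h}\cong Q_{k+2,2,2}$, given by
\[
u_1=v_1\mapsto a_{0,0},\quad u_2=x_1\mapsto a_{0,1},\quad y_1=x_2\mapsto a_{1,1},\quad y_2=v_2\mapsto a_{1,0},\quad z_{i,1}\mapsto a_{0,i+1},\quad z_{i,2}\mapsto a_{1,i+1},
\]
with the second index taken modulo $k+2$. Here $r=2\notin\{0,1\}$ and $2\le\lfloor (k+2)/2\rfloor$ for $k\ge2$, so Theorem~\ref{thm:qkmr} gives non-embeddability. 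Saying the graph is ``close to a shifted quadrilateral torus'' decides nothing until the shift is pinned down, because the width-two unit shift $Q_{k',2,1}$ \emph{is} $\ell_1$-embeddable. Your 5-gonal fallback is also unsupported: you name no violating quintuple, and nothing in the proposal shows one exists for every $k$. The paper's proof of Theorem~\ref{thm:qkmr} for these graphs is a label argument, not a hypermetric one.
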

	
	\begin{proof}
		The graph $Q_{1,2,h}$ is isomorphic to $Q_{3,2,1}$.  An explicit isomorphism is
		\[
		u_1=v_1\mapsto a_{0,0},\quad
		u_2=x_1\mapsto a_{0,1},\quad
		z_{1,1}\mapsto a_{0,2},
		\]
		\[
		y_2=v_2\mapsto a_{1,0},\quad
		y_1=x_2\mapsto a_{1,1},\quad
		z_{1,2}\mapsto a_{1,2}.
		\]
		Since $Q_{3,2,1}$ is $\ell_1$-embeddable by Proposition~\ref{prop:qkmr-positive}, so is $Q_{1,2,h}$.
		
		Now assume first that $k\ge2$ and $m=2$.  Define
		\[
		f(u_1=v_1)=a_{0,0},\quad f(u_2=x_1)=a_{0,1},\quad
		f(y_1=x_2)=a_{1,1},\quad f(y_2=v_2)=a_{1,0},
		\]
		and, for $1\le i\le k$,
		\[
		f(z_{i,1})=a_{0,i+1},\qquad f(z_{i,2})=a_{1,i+1},
		\]
		where the second coordinate is taken modulo $k+2$.  This is an isomorphism
		\[
		Q_{k,2,h}\cong Q_{k+2,2,2}.
		\]
		By Theorem~\ref{thm:qkmr}, the graph $Q_{k+2,2,2}$ is not $\ell_1$-embeddable; hence neither is $Q_{k,2,h}$.
		
		It remains to consider $m\ge3$.  Suppose, for a contradiction, that $Q_{k,m,h}$ has a scale-$\lambda$ embedding.  Facial 4-cycles are isometric, and repeated applications of Lemma~\ref{lem:face-labels} give
		\[
		\ell(w_1w_2)=\ell(x_1x_2)=\ell(z_{1,1}z_{1,2})=\cdots=\ell(z_{k,1}z_{k,2})=\ell(w_mx_m)=\cdots=\ell(w_2x_2).
		\]
		The first and last edges in this chain, $w_1w_2$ and $w_2x_2$, are distinct incident edges.  This contradicts Lemma~\ref{lem:incident}.  Therefore no $Q_{k,m,h}$ with $m\ge3$ is $\ell_1$-embeddable.
	\end{proof}
	
	The three preceding theorems give the Klein-bottle part of the classification.
	
	\begin{corollary}[Klein-bottle maps]\label{cor:klein}
		Among the essential regular quadrilateral maps on the Klein bottle, the $\ell_1$-embeddable graphs are precisely
		\[
		Q_{2,2,a},\qquad Q_{4,2,a},\qquad Q_{2,m,b},\qquad Q_{1,2,h}.
		\]
		Together with Lemma~\ref{lem:klein-reductions}, this completes the classification for all regular quadrilateral Klein-bottle maps.
	\end{corollary}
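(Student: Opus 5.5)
The corollary follows by assembling Theorems~\ref{thm:qkma}, \ref{thm:qkmb} and~\ref{thm:qkmh}, so the proof I plan is bookkeeping rather than new label arguments.

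The first step settles the three essential families separately. Theorem~\ref{thm:qkma} says the only $\ell_1$-members of the $a$-family with $k,m\ge2$ are $Q_{2,2,a}$ and $Q_{4,2,a}$. Theorem~\ref{thm:qkmb}, for even $k$, leaves exactly $Q_{2,m,b}$. Theorem~\ref{thm:qkmh} leaves exactly $Q_{1,2,h}$. Taking the union gives the displayed list, and every other member of these three families is excluded by the ``only if'' directions of those theorems.

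The second step covers the remaining Klein-bottle families through Lemma~\ref{lem:klein-reductions}. I would state explicitly that $\ell_1$-embeddability is a graph invariant: an isomorphism preserves the shortest-path metric, so it transports scale embeddings. It follows that $Q_{k,m,c}$, $Q_{k,m,f}$ and $Q_{k,m,g}$ are $\ell_1$ exactly when their images $Q_{2m,k/2,a}$, $Q_{2(m+1),k/2,b}$ and $Q_{2m+1,k/2,a}$ are. Likewise, the odd-$k$ version of the $b$-gluing is $\ell_1$ exactly when the corresponding $a$-family member is. Each of these cases is then decided by the first step. For example, $Q_{k,m,f}$ is never $\ell_1$ when $m\ge1$, because $2(m+1)\ge4$. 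Similarly, $Q_{k,m,c}$ is $\ell_1$ only when $(2m,k/2)\in\{(2,2),(4,2)\}$, that is, when $k=4$ and $m\in\{1,2\}$.

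The main point needing care is parameter ranges. The reduced parameters must lie in the ranges where Theorems~\ref{thm:qkma}--\ref{thm:qkmh} were proved: $k,m\ge2$ for the $a$-family, and even $k$ with $m\ge2$ for the $b$-family. I would check this against the parameter conventions of \cite{XuQian}. If a degenerate reduced parameter appears, such as $k/2=1$, I would treat the resulting small graph directly, via its underlying simple graph, as in the degenerate cases earlier in the paper. Apart from this check, the corollary is immediate.
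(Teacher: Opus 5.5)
Your proposal is correct and follows the paper's route: the paper obtains the corollary by combining Theorems~\ref{thm:qkma}, \ref{thm:qkmb} and~\ref{thm:qkmh}, and it handles the remaining families through Lemma~\ref{lem:klein-reductions}. Your explicit transfer of the $c$-, $f$- and $g$-families through these isomorphisms, and your caveat about degenerate reduced parameters such as $k/2=1$, are more careful than the paper's one-line justification but do not change the argument.
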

	
	\section{Conclusion}\label{sec:conclusion}
	
	We have determined the $\ell_1$-embeddability of the regular quadrilateral maps on the torus and the Klein bottle in the five essential families $Q_{k,m,r}$, $Q_{k,m,e}$, $Q_{k,m,a}$, $Q_{k,m,b}$, and $Q_{k,m,h}$.  The complete list of positive cases is
	\[
	Q_{k,m,0},\quad Q_{k,2,1},\quad Q_{k,1,e},\quad Q_{5,2,e},\quad Q_{2i,2,e}\ (i\ge2),
	\]
	for the torus and
	\[
	Q_{2,2,a},\quad Q_{4,2,a},\quad Q_{2,m,b},\quad Q_{1,2,h}
	\]
	for the Klein bottle.  All other members of these families fail to be $\ell_1$-embeddable.
	
	The classification shows that local square-grid geometry does not determine $\ell_1$-embeddability.  Untwisted product identifications preserve the two natural cut systems of the square grid, while most shifted or orientation-reversing identifications force incompatible Shpectorov labels after one travels around a non-contractible direction.  The exceptional positive cases survive either because the quotient is a Cartesian product, because it has width two and is isomorphic to a known $\ell_1$-graph, or because it collapses to a small complete graph.
	
	The toroidal family $Q_{k,m,e}$ also yields a concrete circulant consequence: within the two-generator subfamily $\Circ(k,\{\pm1,\pm m\})$ with $1\le m\le\lfloor k/2\rfloor$, the $\ell_1$-embeddable graphs are exactly the cycle cases $m=1$, the exceptional $K_5$, and the even squared-cycle family $\Circ(2i,\{\pm1,\pm2\})$.  It would be natural to ask whether similar label-propagation and hypermetric certificates can classify wider circulant or grid-quotient families.
	
	\section{Acknowledgements} This work was supported by the National Natural Science Foundation of China (12671419), the Natural Science Foundation of Shandong Province (ZR2024MA073), and the Institute for Basic Science (\mbox{IBS-R029-C4}). The authors would like to thank Professor Hong Liu for his valuable guidance during the preparation of this manuscript.
	
	\section{Conflict of Interest Statement} The authors declare that they have no conflict of interest.

	\section{Authorship} Guangfu Wang contributed to the conception and design of the study, supervised the research, provided methodological guidance, and revised the manuscript critically for important intellectual content. Wenchao Cong contributed to the analysis of the graph families, carried out the main proofs, and drafted the manuscript. Chunxiao Xu contributed to the analysis and interpretation of the results, checked the proofs, and helped revise the manuscript. All authors approved the final version of the manuscript, agreed to be accountable for all aspects of the work, and agreed on the order of authorship.

	\section{Data Availability Statement}
	Data sharing not applicable to this article as no datasets were generated or analyzed during the current study.
	
	\section{Declaration of generative AI use}	
	
	The authors declare that they did not use any generative artificial intelligence (AI) or AI-assisted technologies in the preparation   of this manuscript.

\end{document}